\documentclass[12pt]{article}

\usepackage{amsmath, amssymb}
\usepackage{amsthm}
\usepackage{mathrsfs}
\usepackage[margin=1in]{geometry}
\usepackage{ifpdf}
\ifpdf
  \usepackage[hidelinks]{hyperref}
\else
  \usepackage[dvipdfmx,hidelinks]{hyperref}
\fi

\title{Note on Morita equivalence in ring extensions}
\author{Satoshi Yamanaka\\[0.25em]
\small Department of Integrated Science and Technology\\
\small National Institute of Technology, Tsuyama College\\
\small Okayama 708-8509, JAPAN\\
\small \texttt{yamanaka@tsuyama.kosen-ac.jp}}
\date{}

\newtheorem{Theorem}{\quad Theorem}[section]

\newtheorem{Proposition}[Theorem]{\quad Proposition}

\newtheorem{Lemma}[Theorem]{\quad Lemma}

\begin{document}

\maketitle

\begin{abstract}
It seems that Morita invariance is a useful criterion for judging the importance of the classes of ring extensions concerned. 
Y. Miyashita introduced the notion of  Morita equivalence in ring extensions, and he showed that the classes of $G$-Galois extensions and  Frobenius extensions are Morita invariant. After that, S. Ikehata  showed that the classes of separable extensions, Hirata separable extensions, symmetric extensions, and QF-extensions are Morita invariant. In this paper, we shall prove that the classes of several extensions are Morita invariant. Further, we will give an example of the class of ring extensions which is not Morita invariant. 
\end{abstract}

\vspace{0.5cm}
\noindent
{\small {\bf Note for the arXiv version.} This manuscript is the author's accepted manuscript of the article ``Note on Morita Equivalence in Ring Extensions,'' published in {\it Communications in Algebra} {\bf 44} (2016), no.~9, 4121--4131. The content of this manuscript corresponds to the published article. The version of record is available at \href{https://doi.org/10.1080/00927872.2015.1044098}{10.1080/00927872.2015.1044098}.}
\vspace{0.5cm}

{\bf 2010 Mathematics Subject Classification:} 16D90, 16S70\\

{\bf Keywords:} Morita equivalence, ring extension

\section{Introduction and Preliminaries}

Throughout this paper, 
every ring  has identity 1, its subring  contains 1, and every module over a ring is unital. 
Let $A$ and $A'$ be rings, and let ${}_AM_{A'}$ and  ${}_AN_{A'}$ be $A$-$A'$-modules. 
We write ${}_AM_{A'} | {}_AN_{A'}$ if ${}_AM_{A'}$ is isomorphic to a direct summand of a finite direct sum of copies of ${}_AN_{A'}$. 
If ${}_AM_{A'} | {}_AN_{A'}$ and ${}_AN_{A'}|{}_AM_{A'}$, then we write ${}_AM_{A'} \sim {}_AN_{A'}$. 
By ${\rm Hom}^{r}({}_AM,{}_AN)$  we denote the module of all left $A$-homomorphisms 
 from $M$ to $N$ acting on the right side of $M$. 
Similarly, by ${\rm Hom}^{\ell}(M_{A'},N_{A'})$ we denote the module of all right $A'$-homomorphisms 
from $M$ to $N$ acting on the left side of $M$. 
${}_AM_{A'}$ is called a {\it Morita module} if ${}_AM \sim {}_AA$ and ${\rm End}^r({}_AM)=A'$. 

In this paper, $A/B$ will represent a ring extension, that is, $B$ is a subring of $A$.
Let $A/B$ be a  ring extension. 
For a subset $X$ of $A$ and an $A$-$A$-module $S$,  we denote $V_A(X) = \{ a \in A \, | \, x a = a x \  {\rm for \ all }  \ x \in X \}$
and $S^A = \{ s \in S \, | \, \alpha s = s \alpha \ {\rm for \ all} \ \alpha \in A \}$. 
In particular, $V_A(B)$ is the centralizer of $B$ in $A$ and $V_A(A)$ is the center of $A$. 
$A/B$ is called a {\it trivial extension}  if  there is a $B$-$B$-module $S$ such that $A=B \oplus S$
 and the multiplication in $A$ is given by $(b,s)(c,t)=(bc,bt+sc)$.   
The trivial extension is one of the classical ring extensions. 
$A/B$ is called a {\it liberal extension}  if there is a  finite set of elements $\{v_1, \, v_2,  \cdots, \, v_n\}$ of $V_A(B)$
 such that $A=\sum_{i=1}^n v_iB$ in the sense of J. C. Robson and L. W. Small (cf. \cite{RS}).  
$A/B$ is called a {\it left} (resp. {\it right}) {\it depth two extension} if ${}_BA\otimes_BA_A | {}_BA_A$ 
(resp. ${}_AA\otimes_BA_B | {}_AA_B$) (cf. \cite{KS}). 
L. Kadison and K. Szlach{\'a}nyi characterized left and right depth two extensions as follows : 

\begin{Proposition}\label{DT}{\rm \cite[Lemma 3.7]{KS}}
A ring extension $A/B$ is left $($resp. right$)$ depth two if and only if there exist 
$t_i \in (A\otimes_BA)^B$ and $\beta_i \in {\rm End}^\ell({}_BA_B)$ 
such that $\sum_i t_i \beta_i(x)y=x \otimes y$ (resp. $\sum_i x\beta_i(y) t_i =x \otimes y$) for all $x$, $y \in A$.
\end{Proposition}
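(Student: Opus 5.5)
We argue for the left depth two case; the right case is the mirror image, obtained by passing to opposite rings (equivalently, interchanging left and right throughout). The plan is to translate the condition ${}_BA\otimes_BA_A \mid {}_BA_A$ into the existence of finitely many $B$-$A$-homomorphisms
\[
f_i\colon A\otimes_BA\to A,\qquad g_i\colon A\to A\otimes_BA,\qquad \sum_i g_i\circ f_i=\mathrm{id}_{A\otimes_BA}.
\]
Both directions then come from identifying these two Hom-groups with $\mathrm{End}^\ell({}_BA_B)$ and $(A\otimes_BA)^B$.

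\textbf{The identifications.}
\begin{enumerate}
\item Any $B$-$A$-map $g\colon A\to A\otimes_BA$ is determined by $t=g(1)$, via $g(y)=ty$. Left $B$-linearity forces $bt=g(b)=tb$, so $t\in(A\otimes_BA)^B$. Conversely, each $t\in (A\otimes_BA)^B$ defines such a map by $y\mapsto ty$.
\item Any $B$-$A$-map $f\colon A\otimes_BA\to A$ gives $\beta(x)=f(x\otimes 1)$. This $\beta$ is left $B$-linear, and right $B$-linear because $x b\otimes 1=x\otimes b$, so $f(xb\otimes1)=f(x\otimes1)b$ by right $A$-linearity of $f$. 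Thus $\beta\in\mathrm{End}^\ell({}_BA_B)$, and $f(x\otimes y)=f((x\otimes1)y)=\beta(x)y$.
\item Conversely, given $\beta\in\mathrm{End}^\ell({}_BA_B)$, the rule $x\otimes y\mapsto\beta(x)y$ is well defined on $A\otimes_BA$. This holds because $\beta$ is right $B$-linear: $\beta(xb)y=\beta(x)by$. The resulting map is $B$-$A$-linear.
\end{enumerate}

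\textbf{Necessity.} Assume $A/B$ is left depth two and fix maps $f_i,g_i$ as above. Set $t_i=g_i(1)$ and $\beta_i(x)=f_i(x\otimes1)$. Then
\[
x\otimes y=\sum_i g_i(f_i(x\otimes y))=\sum_i g_i(\beta_i(x)y)=\sum_i t_i\beta_i(x)y .
\]
Here the last step uses $g_i(a)=t_ia$ from item 1.

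\textbf{Sufficiency.} Given $t_i$ and $\beta_i$ satisfying the identity, define $f_i(x\otimes y)=\beta_i(x)y$ and $g_i(a)=t_ia$. These are $B$-$A$-maps by items 1 and 3. The identity $\sum_i t_i\beta_i(x)y=x\otimes y$ says exactly that $\sum_i g_if_i=\mathrm{id}$ on simple tensors, hence on all of $A\otimes_BA$. So $A\otimes_BA$ is a direct summand of $A^{n}$ as a $B$-$A$-bimodule.

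The only point needing real care is the well-definedness of $x\otimes y\mapsto \beta(x)y$ over $\otimes_B$. This is precisely where right $B$-linearity of $\beta$ is needed, and it also explains why $\beta$ must lie in $\mathrm{End}^\ell({}_BA_B)$ rather than just be a left $B$-map. The rest is a routine check of linearity.
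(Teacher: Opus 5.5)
Your proof is correct. The paper gives no argument of its own for this proposition (it only cites Kadison--Szlach{\'a}nyi, Lemma 3.7), and your route is the standard one behind that lemma: identify $B$-$A$-maps $A\otimes_BA\to A$ with ${\rm End}^\ell({}_BA_B)$ via $f\mapsto f(-\otimes 1)$ and $B$-$A$-maps $A\to A\otimes_BA$ with $(A\otimes_BA)^B$ via $g\mapsto g(1)$, then read the splitting $\sum_i g_if_i=\mathrm{id}$ through these identifications, with the right-handed case following by the mirror argument (equivalently, passing to $A^{op}/B^{op}$ via the flip $A^{op}\otimes_{B^{op}}A^{op}\cong A\otimes_BA$), which yields $\sum_i x\beta_i(y)t_i=x\otimes y$.
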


For a ring extension $A/B$, we set  $V=V_A(B)$ and $C=V_A(A)$. 
$A/B$ is called a {\it separable extension} if the $A$-$A$-homomorphism of $A \otimes_BA$ onto $A$
defined by $a \otimes b \to ab$ splits, and 
$A/B$ is called a {\it Hirata separable extension} if $A \otimes_BA$ is
$A$-$A$-isomorphic to a direct summand of a finite direct sum of copies of $A$, 
or equivalently, 
$V$ is finitely generated projective as $C$-module and ${}_AA\otimes_BA_A \cong {}_A{\rm Hom}^{\ell}(V_C, A_C)_A$. 
The notion of Hirata separable extensions was introduced by K. Hirata as a generalization of Azumaya algebras (cf. \cite{H}).  
In \cite{MM}, A. Mewborn and E. McMahon generalized the notion of  Hirata separable extensions as follows : 
$A/B$ is called a {\it strongly separable extension} if $V$ is finitely generated projective as $C$-module 
and an $A$-$A$-homomorphism from $A\otimes_BA$ to ${\rm Hom}^{\ell}(V_C, A_C)$ 
defined by $x \otimes y \mapsto [v \mapsto xvy]$ splits. 
It is obvious that a Hirata separable extension is strongly separable, and by \cite[Proposition 3.2]{MM}, 
a strongly separable extension is separable. 
In \cite{S3}, K. Sugano characterized strongly separable extensions as follows : 

\begin{Proposition}\label{SS}{\rm \cite[Theorem 1.1]{S3}}
A ring extension $A/B$ is strongly separable if and only if 
there exist $v_i \in V_A(B)$ and $\sum_j x_{ij} \otimes y_{ij} \in (A \otimes_B A)^A$ such that 
$u= \sum_{i,j}v_ix_{ij} u y_{ij}$ for all $u \in V_A(B)$.
\end{Proposition}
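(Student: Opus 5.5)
Proposition \ref{SS} is quoted from Sugano, but I will sketch how it can be proved directly. I will work with the $A$-$A$-homomorphism $\mu : A\otimes_BA \to {\rm Hom}^{\ell}(V_C,A_C)$, $x\otimes y\mapsto[v\mapsto xvy]$, and with $H={\rm Hom}^{\ell}(V_C,A_C)$. Since $V$ is finitely generated projective over $C$, there is an $A$-$A$-isomorphism $H\cong A\otimes_C V^*$, where $V^*={\rm Hom}(V_C,C_C)$; I will use this identification throughout. The proof then reduces to comparing the splitting of $\mu$ with the stated elements.

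\textbf{($\Rightarrow$).} Assume $A/B$ is strongly separable, so $\mu$ has an $A$-$A$-splitting $\sigma: H\to A\otimes_BA$ with $\mu\sigma={\rm id}$. Take a dual basis $\{v_i, f_i\}$ of $V_C$, where $v_i\in V$ and $f_i\in V^*$. The elements $f_i$, regarded in $H$ via $C\subseteq A$, commute with $A$, since $(af_i)(v)=af_i(v)=f_i(v)a$. Hence $\sigma(f_i)\in (A\otimes_BA)^A$, and I write $\sigma(f_i)=\sum_j x_{ij}\otimes y_{ij}$. Then $\mu\sigma(f_i)=f_i$ gives $\sum_j x_{ij}uy_{ij}=f_i(u)$ for every $u\in V$, so
\[
\sum_{i,j}v_ix_{ij}uy_{ij}=\sum_i v_if_i(u)=u .
\]

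\textbf{($\Leftarrow$).} Given the elements $v_i$ and $\sum_j x_{ij}\otimes y_{ij}$, set $g_i(u)=\sum_j x_{ij}uy_{ij}$. Each $g_i$ is right $C$-linear. Because $\sum_j x_{ij}\otimes y_{ij}$ is $A$-central, $g_i(u)$ commutes with every $a\in A$, so $g_i(u)\in C$. The identity $u=\sum_i v_ig_i(u)$ then shows that $\{v_i,g_i\}$ is a dual basis, so $V_C$ is finitely generated projective. Next define
\[
\sigma(h)=\sum_{i,j} h(v_i)\,x_{ij}\otimes y_{ij}.
\]
This map is left $A$-linear. It is also right $A$-linear: using $h\in A\otimes_CV^*$, write $h=\sum a_k\otimes\varphi_k$, and move elements of $C$ past the $A$-central tensor. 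Then
\[
\mu\sigma(h)(u)=\sum_{i,j} h(v_i)x_{ij}uy_{ij}=\sum_i h(v_i)g_i(u)=h\Bigl(\sum_i v_ig_i(u)\Bigr)=h(u).
\]

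\textbf{Main obstacle.} The delicate point is verifying the bimodule structure on $H$, in particular that $\sigma$ is right $A$-linear. The right $A$-action on $H$ is $(ha)(v)=h(v)a$, so I need
\[
\sum_{i,j} h(v_i)a\,x_{ij}\otimes y_{ij}=\sum_{i,j} h(v_i)\,x_{ij}\otimes y_{ij}a .
\]
This follows from $A$-centrality of the tensor, since $a\,x_{ij}\otimes y_{ij}=x_{ij}\otimes y_{ij}\,a$. I expect this verification to need the most care, but it should go through.
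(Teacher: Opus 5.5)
Your proof is correct. The paper gives no proof of this proposition; it simply quotes Theorem 1.1 of Sugano's paper, so there is no in-paper argument to compare against.

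Your argument is the standard dual-basis proof and is self-contained:
\begin{itemize}
\item[(i)] For ($\Rightarrow$), applying the $A$-$A$-splitting $\sigma$ to a dual basis $\{v_i,f_i\}$ of $V_C$ does the job. Here $f_i\in H^A={\rm Hom}(V_C,C_C)$, so the images $\sigma(f_i)$ land in $(A\otimes_BA)^A$.
\item[(ii)] For ($\Leftarrow$), the maps $g_i(u)=\sum_j x_{ij}uy_{ij}$ take values in $C$ and form a dual basis together with the $v_i$. After that, $h\mapsto\sum_{i,j}h(v_i)\,x_{ij}\otimes y_{ij}$ is an $A$-$A$-section of $\mu$.
\end{itemize}

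Two small remarks.

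First, the identification $H\cong A\otimes_C V^*$ is never actually needed. Right $A$-linearity of $\sigma$ follows by left-multiplying $a\,t_i=t_i\,a$ by $h(v_i)$ and summing over $i$, exactly as in your last paragraph. So the ``main obstacle'' is a one-line check.

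Second, you use one point implicitly and should state it. For $u\in V_A(B)$, the assignment $x\otimes y\mapsto xuy$ is a well-defined $A$-$A$-homomorphism $A\otimes_BA\to A$, because $u$ commutes with $B$. This is what makes $g_i$ well defined, and applying it to $a\,t_i=t_i\,a$ is what gives $g_i(u)\in C$.
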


Let $S$ be an $A$-$A$-module, and $D:A \longrightarrow S$ an additive map. 
Then  $D$  is called a {\it $B$-derivation} of $A$ to $S$ if $D(xy)=D(x)y+xD(y)$  and $D(b)=0$ for all $b \in B$,
 $D$ is called {\it inner} if $D(x)=sx-xs$ for some fixed element $s \in S$, 
and $D$ is called {\it central} if $D(A) \subseteq S^A$.  
By \cite[Satz 4.2]{E}, $A/B$ is separable if and only if  for any $A$-$A$-module $S$, 
every $B$-derivation of $A$ to $S$ is inner.    
In \cite{N}, Y. Nakai introduced the notion of a quasi-separable extension of commutative rings  by using the module differentials, 
and in the noncommutative case, it was characterized by H. Komatsu \cite[Lemma 2.1]{Ko} as follows : 
$A/B$ is a {\it quasi-separable extension} if and only if for any $A$-$A$-module $S$, every central $B$-derivation of $A$ to $S$ is zero. 
Recently, in \cite{HN}, N. Hamaguchi and A. Nakajima generalized 
the notions of separable extensions and quasi-separable extensions as follows : 
$A/B$ is called a {\it weakly separable extension} if every $B$-derivation of $A$ to $A$ is inner,   
and $A/B$  is called a {\it weakly quasi-separable extension}   if every central $B$-derivation  of $A$ to $A$ is zero. 
Obviously, a separable extension is weakly separable and a quasi-separable extension is weakly quasi-separable. 
Moreover, a separable extension is quasi-separable by \cite[Theorem 2.1]{Ko}.

In \cite{M2}, Y. Miyashita introduced the notion of Morita invariance in ring extensions. 
Let $A/B$ and $A'/B'$ be ring extensions. 
Then we write $A/B \sim {A'}/{B'}$ if there exist Morita modules ${}_AM_{A'}$ and ${}_BN_{B'}$ 
such that ${}_AA \otimes_B N _{B'} \cong {}_AM_{B'}$.  
As was shown in \cite[Proposition 3.2]{M2}, $\sim$ is an equivalence relation. 
When this is the case,  we say that $A/B$ and $A'/B'$ are {\it Morita equivalent}. 
A class $\mathscr{C}$ of ring extensions is called {\it Morita invariant} if $\mathscr{C}$ has the following property : 
if $A/B$ is in $\mathscr{C}$ and $A/B \sim {A'}/{B'}$, then  ${A'}/{B'}$ is also in $\mathscr{C}$. 

In section 2, we shall recall some results which have been obtained in \cite{I0}, 
and we shall show some new results in the case of $A/B \sim {A'}/{B'}$. 
In section 3, we shall prove the classes of trivial extensions, liberal extensions, depth two extensions, strongly separable extensions, 
and weakly separable extensions  are Morita invariant. 
Finally, we will give an example of the class of ring extensions which is not Morita invariant.

\section{Preliminary results}
In this section, we assume that $A/B \sim {A'}/{B'}$, namely, there exist Morita modules ${}_AM_{A'}$ and ${}_BN_{B'}$ 
such that ${}_AA \otimes_B N_{B'} \cong {}_AM_{B'}$. 
We set $N^{*}={\rm Hom}^r({}_BN,{}_BB)$, and by $u^\rho$ we denote  the image of $u \in N$ by $\rho \in N^{*}$. 
Since ${}_BN \sim {}_BB$, there are two systems $\{g_k, n_k\}$ and $\{f_j , m_j\}$ ($g_k, \ f_j \in N^{*}$, $n_k, \ m_j \in N$) 
such that  $\sum_k {n_k}^{g_k}=1$ and $\sum_j n^{f_j} \cdot m_j =n$ for all $n \in N$. 
Moreover, we see that the map $\eta :N^{*} \otimes_B N \longrightarrow B'$ defined by $\rho \otimes u \mapsto \rho \cdot u_r$ 
($u_r$ is a right multiplication of $u$) is a $B'$-$B'$-isomorphism ($\eta^{-1}$ is given by $h \mapsto \sum_j f_j \otimes {m_j}^h$),
and the map $\xi : N \otimes_{B'} N^{*}  \longrightarrow B$ defined by $u \otimes \rho \mapsto u^\rho$ 
is a   $B$-$B$-isomorphism ($\xi^{-1}$ is given by $b \mapsto \sum_k b \cdot n_k \otimes g_k$).  
Now we can  define the map $\alpha : N^{*} \otimes_B A\otimes_B N \longrightarrow A'={\rm End}^r({}_AA\otimes_BN)$ 
 by $\rho \otimes x \otimes u \mapsto [y \otimes v \mapsto y \cdot v^\rho \cdot x \otimes u]$. 
Then $\alpha$ is a $B'$-$B'$-isomorphism, which induces a $B'$-$B'$-isomorphism $N^{*} \otimes_B B \otimes_B N \longrightarrow B'$. 
In fact,  $\alpha^{-1}$ is given by $h \mapsto \sum_j f_j \otimes (1 \otimes m_j)^h$. 
Moreover, we can define the multiplication among the elements of  $N^{*} \otimes_B A\otimes_B N$ by  
$(\rho \otimes x \otimes u) (\sigma \otimes y \otimes v) = \rho \otimes x \cdot u^\sigma \cdot y \otimes v$.  
Obviously, $\sum_j f_j \otimes 1 \otimes m_j$ is the identity element of the ring $N^{*} \otimes_B A \otimes_B N$, 
and $\alpha$ is a ring isomorphism.  
Thus, we can identify $A'$ and $B'$ with $N^{*} \otimes_B A \otimes_B N$ and $N^{*} \otimes_B B \otimes_B N$, respectively. 

Now we shall state some lemmas which have been obtained in \cite{I0}.  
The following lemma can be proved by a direct computation.  
\begin{Lemma}\label{L0}{\rm \cite[Lemma 1]{I0}}
${}_BA |{}_BB$ $($resp. ${}_BB|{}_BA$ $)$ implies  ${}_{B'}A' |{}_{B'}B'$ $($ resp. ${}_{B'}B'|{}_{B'}A'$ $)$.
\end{Lemma}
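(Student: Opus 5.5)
We work entirely through the identifications $A' = N^{*}\otimes_B A\otimes_B N$ and $B' = N^{*}\otimes_B B\otimes_B N \cong N^{*}\otimes_B N$ from the preceding discussion. Under them the left $B'$-module structure on $A'$ is just multiplication on the first factor $N^{*}$. So ${}_{B'}A'$ is obtained from ${}_BA$ by applying the functor $F = N^{*}\otimes_B(-)\otimes_B N$: we have $A' = F(A)$ and $B' = F(B)$, and $F$ turns left $B$-modules that are right $B$-modules into left $B'$-modules. The difficulty is that ${}_BA|{}_BB$ is only a statement about \emph{left} $B$-modules, while $F$ also involves the right $B$-structure of $A$, via $\otimes_B N$. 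So we cannot simply apply a functor to a left-module splitting. Instead we will transport explicit splitting data.

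For the first claim, suppose ${}_BA|{}_BB$. Then there are left $B$-linear maps $\phi_i\colon A\to B$ and elements $a_i\in A$ with $x=\sum_i \phi_i(x)a_i$ for all $x\in A$. For the direction ${}_BB|{}_BA$ we instead have left $B$-linear $\psi_i\colon B\to A$ and $\theta_i\colon A\to B$ with $\sum_i\theta_i\psi_i=\mathrm{id}_B$. Equivalently, $\psi$ is given by right multiplication by elements $c_i\in A$, and we need $\sum_i\theta_i(c_i)=1$. In each case we will build left $B'$-linear maps between $A'$ and $B'$ out of these data and the dual bases $\{f_j,m_j\}$, $\{g_k,n_k\}$.

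For ${}_BA|{}_BB$, define $\Phi_{i,j,k}\colon A'\to B'$ by
\[
\rho\otimes x\otimes u \longmapsto \rho\otimes \phi_i\big(x\cdot u^{g_k}\big)\otimes n_k ,
\]
perhaps followed by the appropriate $m_j,f_j$ insertion. The factor $u^{g_k}$ absorbs $u$ into $A$ before $\phi_i$ is applied. This makes the map well defined on the tensor over $B$ at the $N$-slot, because $\phi_i$ is only left linear: the expression depends on $xb\otimes u$ only through $xbu^{g_k}=x(bu)^{g_k}$. It is also well defined at the $N^{*}$-slot, since $\phi_i$ is left $B$-linear, and it is clearly left $B'$-linear. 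We pair these maps with the elements $\sum f_j\otimes a_i\otimes m_j$ (suitably indexed) of $A'$. We then check the dual basis identity $\sum \Phi(\,\cdot\,)\cdot(\text{element})=\mathrm{id}_{A'}$ using $\sum_k n_k^{g_k}=1$, $\sum_j n^{f_j}m_j=n$, and $x=\sum\phi_i(x)a_i$. This verification is the main obstacle: the correct placement of $u^{g_k}$ versus $n_k$ has to be chosen so that right $B$-balancing holds. If the first guess fails, the fallback is to rewrite $A'\cong N^{*}\otimes_B (A\otimes_B N)$ and use that $A\otimes_BN\cong M$ with ${}_BM$ isomorphic to ${}_BA\otimes_BN$ as left $B$-modules. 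Then ${}_BA|{}_BB$ gives ${}_BA\otimes_BN \,|\, {}_BN\otimes\ldots$ only if the splitting is balanced, so the elementwise route seems the necessary one.

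The second claim is symmetric. We take $\Psi\colon B'\to A'$, $\rho\otimes u\mapsto \sum_{k}\rho\otimes c_i u^{g_k}\otimes n_k$ (adjusted so that it is balanced), and $\Theta\colon A'\to B'$ built from $\theta_i$ exactly as $\Phi$ was built from $\phi_i$. We then verify $\sum\Theta\Psi=\mathrm{id}_{B'}$ from $\sum\theta_i(c_i)=1$ and the dual basis identities. Throughout, the well-definedness of each map on the balanced tensor products is the only delicate point; the remaining identities are routine substitutions.
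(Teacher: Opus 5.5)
\paragraph{There is a genuine gap.} The paper gives no argument beyond ``a direct computation'', and transporting dual-basis data is that kind of computation. However, your proposal never produces a working splitting, and the maps you do write down fail.

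\paragraph{The first claim.} Your $\Phi_{i,k}(\rho\otimes x\otimes u)=\rho\otimes\phi_i(x\,u^{g_k})\otimes n_k$ is well defined and left $B'$-linear. But the family is not jointly injective, so no choice of partner elements, and no post-composition, can give $\mathrm{id}_{A'}$. Concretely, take $N=B\oplus B$, with $f_1,f_2$ the coordinate projections, $m_1=(1,0)$, $m_2=(0,1)$, and the single pair $g_1=f_1$, $n_1=m_1$. Then every $\Phi_{i,1}$ kills $f_2\otimes 1\otimes m_2$, which is the nonzero matrix unit $E_{22}$ of $A'\cong M_2(A)$. The system $\{g_k,n_k\}$ only detects a copy of $B$ inside $N$. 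It is the dual basis $\{f_j,m_j\}$ that must go inside $\phi_i$, with $\{g_k,n_k\}$ serving only as glue. Set $\Phi_{i,j,k}(\rho\otimes x\otimes u)=\rho\otimes\phi_i(x\,u^{f_j})\otimes n_k$ and $e_{i,j,k}=g_k\otimes a_i\otimes m_j$. Then $\sum_{i,j,k}\Phi_{i,j,k}(\rho\otimes x\otimes u)\,e_{i,j,k}=\sum_{i,j}\rho\otimes\phi_i(x\,u^{f_j})a_i\otimes m_j=\rho\otimes x\otimes u$.

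\paragraph{The second claim.} Your $\Psi(\rho\otimes u)=\sum_k\rho\otimes c_iu^{g_k}\otimes n_k$ is not well defined on $N^{*}\otimes_BN$ in general: $\rho b\otimes u$ and $\rho\otimes bu$ produce $bc_i$ versus $c_ib$. This contradicts your own remark that $\psi_i$ is right multiplication by $c_i$. Left $B'$-maps $B'\to A'$ are exactly the right multiplications $h\mapsto he$. So take $e_{i,j,k}=f_j\otimes c_i\otimes n_k$ and $\Theta_{i,j,k}(\rho\otimes x\otimes u)=\rho\otimes\theta_i(x\,u^{g_k})\otimes m_j$. Then $\sum_{i,j,k}\Theta_{i,j,k}(e_{i,j,k})=\sum_j f_j\otimes 1\otimes m_j=1_{B'}$, which gives the splitting. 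So the delicate point is not balancedness; for $\Phi$ that is automatic. It is which dual system reconstructs $u$, and that is exactly what you left as ``suitably indexed''.

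\paragraph{The module-theoretic route works.} Your reason for abandoning it is mistaken: what is needed is a splitting of $N$, not a balanced splitting of $A$.
\begin{itemize}
\item Applying the additive functor ${}_AA\otimes_B-$ to ${}_BN\sim{}_BB$ gives ${}_A(A\otimes_BN)\sim{}_AA$. This is also just ${}_AM\sim{}_AA$.
\item Restricting scalars then gives ${}_B(A\otimes_BN)\sim{}_BA$.
\item The functor $N^{*}\otimes_B-$ sends ${}_B(A\otimes_BN)$ to ${}_{B'}A'$, and sends ${}_BB$ to ${}_{B'}N^{*}\sim{}_{B'}N^{*}\otimes_BN\cong{}_{B'}B'$.
\item Any additive functor preserves $|$.
\end{itemize}
Hence ${}_BA|{}_BB$ (resp. ${}_BB|{}_BA$) yields ${}_{B'}A'|{}_{B'}B'$ (resp. ${}_{B'}B'|{}_{B'}A'$) in a few lines, with no elementwise checking at all.
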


For any $x \in A$, we set ${x'}_{(j,k)} = f_j \otimes x \otimes n_k$ and ${x'}_{[j,k]}= g_k \otimes x \otimes m_j$. 
Then S. Ikehata considered the following maps : 
\begin{align*}
&\varphi : A \longrightarrow A', \ \ \varphi(x) = \sum_j f_j \otimes x \otimes m_j\\
&\phi : {\rm End}^{\ell}({}_BA_B) \longrightarrow {\rm End}^{\ell}({}_{B'}{A'}_{B'}), \ \ \phi (\eta ) = 1 \otimes \eta \otimes 1\\
&\psi : A \otimes_B A \longrightarrow A' \otimes_{B'} A', \ \ \psi (x \otimes y) = \sum_{j,k} {x'}_{(j,k)} \otimes {y'}_{[j,k]}.
\end{align*}

He proved the following lemmas (Lemma \ref{L2}, Lemma \ref{L3}, and Lemma \ref{L4})
 by making use of the above maps. These  are useful in our subsequent study.

\begin{Lemma}\label{L1}{\rm \cite[Lemma 2]{I0}}
$\varphi$ induces ring isomorphisms $V_A(B) \cong V_{A'}(B')$, $V_A(A) \cong V_{A'}(A')$, and $V_B(B) \cong V_{B'}(B')$.
\end{Lemma}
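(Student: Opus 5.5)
We have to prove Lemma \ref{L1}: that $\varphi(x)=\sum_j f_j\otimes x\otimes m_j$ restricts to ring isomorphisms $V_A(B)\cong V_{A'}(B')$, $V_A(A)\cong V_{A'}(A')$ and $V_B(B)\cong V_{B'}(B')$. We work throughout in the identifications $A'=N^*\otimes_BA\otimes_BN$ and $B'=N^*\otimes_BB\otimes_BN$, with the multiplication $(\rho\otimes x\otimes u)(\sigma\otimes y\otimes v)=\rho\otimes x\,u^\sigma y\otimes v$ and identity $\sum_j f_j\otimes1\otimes m_j$.

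\textbf{Step 1: a formula for the product.} For $x\in V_A(B)$ (or merely $x$ commuting with $B$), we first show that $\varphi(x)$ multiplies on either side by a simple rule. The left product is
\[
\varphi(x)(\sigma\otimes y\otimes v)=\sum_j f_j\otimes x\,m_j^{\sigma}y\otimes v .
\]
Since $m_j^\sigma\in B$ commutes with $x$, we can move it across the tensor sign. Then $\sum_j f_j m_j^\sigma=\sigma$, using $\sum_j n^{f_j}m_j=n$ and the fact that the $f_j$ act on the right. This gives
\[
\varphi(x)(\sigma\otimes y\otimes v)=\sigma\otimes xy\otimes v .
\]
Symmetrically, $(\sigma\otimes y\otimes v)\varphi(x)=\sigma\otimes yx\otimes v$, using $\sum_j v^{f_j}m_j=v$.

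\textbf{Step 2: $\varphi$ is a ring homomorphism into the right centralizers.} Step 1 immediately gives $\varphi(xy)=\varphi(x)\varphi(y)$ and $\varphi(1)=1$. For $x\in V_A(B)$, the element $\sigma\otimes b\otimes v$ of $B'$ commutes with $\varphi(x)$, since $xb=bx$. So $\varphi(V_A(B))\subseteq V_{A'}(B')$. The same argument gives $\varphi(V_A(A))\subseteq V_{A'}(A')$ and $\varphi(V_B(B))\subseteq V_{B'}(B')$; for the last, note that $\varphi(b)$ lies in the image of $N^*\otimes B\otimes N$.

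Injectivity follows from the action of $\varphi(x)$ on ${}_AA\otimes_BN$ via $\alpha$. This action sends $y\otimes v\mapsto\sum_j y\,v^{f_j}x\otimes m_j=yx\otimes v$, again because $x$ commutes with $B$. So $\varphi(x)=0$ forces $x\otimes v=0$ for all $v$. Tensoring with $N^*$ and using $N\otimes_{B'}N^*\cong B$ then yields $x=0$.

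\textbf{Step 3: surjectivity (main obstacle).} Given $h\in V_{A'}(B')$, we need a preimage. The natural candidate is
\[
x=\sum_{k,l}\mu\bigl((g_k\otimes n_k)\,\text{-type contraction}\bigr),
\]
or more concretely, we transport $h$ through the inverse Morita equivalence. By symmetry of $\sim$ (\cite[Proposition 3.2]{M2}), $A/B$ is recovered from $A'/B'$ as $N\otimes_{B'}A'\otimes_{B'}N^*$. This gives an analogous map $\varphi'$ which, by Steps 1 and 2, sends $V_{A'}(B')$ injectively into $V_A(B)$. We then check that $\varphi'\circ\varphi$ is the identity under the identification $N\otimes_{B'}N^*\otimes_BA\otimes_BN\otimes_{B'}N^*\cong B\otimes_BA\otimes_BB=A$ given by $\xi$. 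That computation reduces to
\[
\sum_{j,k}n_k^{f_j}\,x\,m_j^{g_k}=\sum_k x\,n_k^{g_k}=x ,
\]
which holds because $x$ commutes with $B$.

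Then $\varphi\circ\varphi'$ is the identity by the symmetric computation, so $\varphi$ is bijective. The delicate part is writing the composite identification carefully, so that it really is the isomorphism built from $\xi$ and $\eta$ and so that $\varphi'$ is compatible with it.

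Alternatively, we can argue directly. For $h\in V_{A'}(B')$, set
\[
x=\sum_{k,l}\langle n_k,\,h\text{-component}\rangle ,
\]
namely $x$ is the element with $\alpha(h)(1\otimes n_k)$ contracted by $g_k$ via $\xi$. We then verify $x\in V_A(B)$ and $\varphi(x)=h$, using that $h$ commutes with the elements $g_k\otimes 1\otimes m_j$ of $B'$. The same construction, restricted to $V_{A'}(A')$ and $V_{B'}(B')$, handles the other two isomorphisms.
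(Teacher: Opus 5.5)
Your Steps 1 and 2 are correct and complete: for $x\in V_A(B)$ you get $\varphi(x)(\sigma\otimes y\otimes v)=\sigma\otimes xy\otimes v$ and its mirror image. That yields multiplicativity, the inclusions of $\varphi(V_A(B))$, $\varphi(V_A(A))$ and $\varphi(V_B(B))$ into the corresponding centralizers, and injectivity. The paper gives no proof of this lemma; it quotes Ikehata.

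The gap is Step 3. Surjectivity is the only nontrivial part, and it is not carried out.
\begin{itemize}
\item The first displayed ``candidate'' is a placeholder.
\item The symmetry route depends on two unchecked facts. One is that $\{n_k,g_k\}$ is a dual system for ${}_{B'}N^*$. The other is that the $\xi$- and $\eta$-identifications are compatible ring isomorphisms. You flag these as delicate but never verify them.
\item In the direct route you say that both $x\in V_A(B)$ and $\varphi(x)=h$ follow from $h$ commuting with the elements $g_k\otimes1\otimes m_j$. That suffices for $\varphi(x)=h$, but not for $x\in V_A(B)$. For example, take $N=B$ with both dual systems equal to $\{1,1\}$. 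Then each $g_k\otimes1\otimes m_j$ is the identity of $A'$, and your $x$ is just $h$ under $A'\cong A$.
\end{itemize}

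The direct route can be closed with the same device the paper uses at the end of the proof of Lemma~\ref{L4} and for the map $\pi$ in Lemma~\ref{L5}. For fixed $s,\ell$, the contraction $c_{s\ell}:\rho\otimes y\otimes u\mapsto n_s^{\rho}\,y\,u^{g_\ell}$ is well defined on $N^*\otimes_BA\otimes_BN$. For $h=\sum_\lambda\rho_\lambda\otimes x_\lambda\otimes u_\lambda\in V_{A'}(B')$, put $x=\sum_k c_{kk}(h)=\sum_{k,\lambda}n_k^{\rho_\lambda}x_\lambda u_\lambda^{g_k}$. Then
\[
\varphi(x)=\sum_{j,k}(f_j\otimes1\otimes n_k)\,h\,(g_k\otimes1\otimes m_j)=\sum_{j,k}(f_j\otimes1\otimes n_k)(g_k\otimes1\otimes m_j)\,h=h .
\]
This uses only the elements $g_k\otimes1\otimes m_j\in B'$. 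To get $x\in V_A(B)$ you need the elements $g_s\otimes b\otimes n_\ell$ with $b\in B$. Apply $c_{s\ell}$ to both sides of $(g_s\otimes b\otimes n_\ell)h=h(g_s\otimes b\otimes n_\ell)$ and sum over $s,\ell$:
\[
\sum_{s,\ell,\lambda}n_s^{g_s}\,b\,n_\ell^{\rho_\lambda}x_\lambda u_\lambda^{g_\ell}=\sum_{s,\ell,\lambda}n_s^{\rho_\lambda}x_\lambda u_\lambda^{g_s}\,b\,n_\ell^{g_\ell},
\]
that is, $bx=xb$. The other two cases follow the same way:
\begin{itemize}
\item Replacing $b$ by any $a\in A$ gives $x\in V_A(A)$ when $h\in V_{A'}(A')$.
\item For $h\in V_{B'}(B')$, the same $x$ lies in $B$.
\end{itemize}
Your injectivity argument amounts to $\sum_k c_{kk}(\varphi(x))=\sum_{j,k}n_k^{f_j}x\,m_j^{g_k}=x$ for $x\in V_A(B)$. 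Combined with the above, this shows that $\sum_k c_{kk}$ is the inverse of $\varphi$ on each centralizer, just as $\pi$ inverts $\phi$ in Lemma~\ref{L5}.
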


\begin{Lemma}\label{L2}{\rm \cite[Lemma 3]{I0}} \par
{\rm (1)} $\phi$ is a ring isomorphism. \par
{\rm (2)} $\phi$ induces an additive group isomorphism 
${\rm Hom}^{\ell}({}_BA_B, {}_BB_B) \cong {\rm Hom}^{\ell}({}_{B'}{A'}_{B'}, {}_{B'}{B'}_{B'}).$

{\rm (3)} $\phi$ induces an additive group isomorphism ${\rm Aut}^{\ell}(A/B) \cong {\rm Aut}^{\ell}(A'/B')$. \par
{\rm (4)} If $H$ is a subgroup of ${\rm Aut}^{\ell}(A/B)$ with $A^H =B$, then ${A'}^{\phi(H)} =B'$.
\end{Lemma}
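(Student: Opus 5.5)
Throughout I work in the identification $A' = N^{*}\otimes_B A\otimes_B N$ and $B' = N^{*}\otimes_B B\otimes_B N$ from Section 2, with product $(\rho\otimes x\otimes u)(\sigma\otimes y\otimes v)=\rho\otimes x\cdot u^\sigma\cdot y\otimes v$. The strategy is to check that $\phi(\eta)=1\otimes\eta\otimes 1$ is well defined and lands in ${\rm End}^{\ell}({}_{B'}A'_{B'})$, to write down an explicit inverse built from the dual bases $\{f_j,m_j\}$ and $\{g_k,n_k\}$, and then to read off (2)--(4) as restrictions of $\phi$.

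\textbf{Well-definedness, additivity and multiplicativity of $\phi$.} Because $\eta$ is $B$-$B$-linear, $1\otimes\eta\otimes 1$ is well defined on $N^{*}\otimes_B A\otimes_B N$. It is left $B'$-linear: for $\sigma\otimes b\otimes v\in B'$,
\[
(\sigma\otimes b\otimes v)(\rho\otimes x\otimes u)=\sigma\otimes b\,v^\rho x\otimes u ,
\]
and $\eta(b\,v^\rho x)=b\,v^\rho\eta(x)$ since $b\,v^\rho\in B$. Right $B'$-linearity follows in the same way. Since $\eta\mapsto 1\otimes\eta\otimes1$ is additive and respects composition and the identity, $\phi$ is a unital ring homomorphism. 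Multiplicativity must be checked against the composition convention for ${\rm End}^{\ell}$, but the functor $N^{*}\otimes_B -\otimes_B N$ preserves composition order, so this is routine.

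\textbf{Inverse map.} Let $\theta\in{\rm End}^{\ell}({}_{B'}A'_{B'})$. Set $e_{kj}=g_k\otimes 1\otimes m_j$, using the identities $\sum_k n_k^{g_k}=1$ and $\sum_j n^{f_j}m_j=n$. Define
\[
\psi'(\theta)(x)=\sum_{k,l}\; n_k^{\,\bigl(\theta(f\text{-}g\text{ sandwich of }x)\bigr)}\cdots ,
\]
more precisely by transporting back along the inverse Morita functor $N\otimes_{B'}-\otimes_{B'}N^{*}$. Concretely,
\[
N\otimes_{B'}A'\otimes_{B'}N^{*}\cong N\otimes_{B'}N^{*}\otimes_B A\otimes_B N\otimes_{B'}N^{*}\cong B\otimes_B A\otimes_B B=A
\]
via $\xi$. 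So put $\phi'(\theta)=\xi\circ(1\otimes\theta\otimes1)\circ\xi^{-1}$, which is $B$-$B$-linear for the same reason as above. Then verify $\phi'\phi={\rm id}$ and $\phi\phi'={\rm id}$ using the associativity isomorphisms together with the fact that $\xi$ and $\eta$ are compatible, i.e.\ $u^\rho v=u\cdot(\rho\cdot v_r)$. The main obstacle is exactly this compatibility bookkeeping: under the identification of $A'$ with the tensor product, one must confirm that the composite isomorphism $A\to N\otimes_{B'}A'\otimes_{B'}N^{*}\to A$ is the identity. I would check it on elements using $\sum_k n_k^{g_k}=1$ and $\sum_j f_j\otimes m_j\mapsto 1_{B'}$.

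\textbf{Parts (2)--(4).} For (2), the same construction applies with $B$ as the target bimodule. A $B$-$B$-map $\eta:A\to B$ gives $1\otimes\eta\otimes1:A'\to N^{*}\otimes_B B\otimes_B N=B'$, and the inverse restricts in the same way, so we obtain an additive bijection. For (3), $\phi$ is multiplicative on endomorphisms, so it carries invertible maps to invertible maps. Interpreting ${\rm Aut}^{\ell}(A/B)$ as ring automorphisms fixing $B$, I check that $1\otimes\eta\otimes1$ is multiplicative for the product on $A'$:
\[
\eta(x\,u^\sigma y)=\eta(x)\,u^\sigma\,\eta(y),
\]
since $u^\sigma\in B$ is fixed by $\eta$. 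It fixes $B'$ elementwise and is bijective; conversely $\phi'$ preserves these properties, so $\phi$ restricts to a group isomorphism. For (4), the inclusion $B'\subseteq A'^{\phi(H)}$ is clear. Conversely, let $a'\in A'^{\phi(H)}$. Write $a'=\sum_{j,k}e_{jj}'\,a'\,e_{kk}'$ using the idempotent decomposition $1=\sum_j f_j\otimes1\otimes m_j$, and express $a'=\sum f_j\otimes x_{jk}\otimes m_k$ with $x_{jk}=m_j^{?}\cdots$, namely $x_{jk}=\xi$-components obtained by applying $\phi'$-type contraction with $n_l,g_l$. These components are $H$-invariant because $\phi(h)$ acts as $h$ in the middle slot, so they lie in $A^H=B$, and hence $a'\in B'$.
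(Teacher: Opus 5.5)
Your argument is correct, and it uses the same technique this paper relies on. The paper does not reprove this lemma (it is quoted from Ikehata), but the proof of Lemma~\ref{L5} is its derivation analogue.

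Unwinding your inverse $\phi'(\theta)=\xi\circ(1\otimes\theta\otimes 1)\circ\xi^{-1}$ gives exactly the contraction used there. If $\theta(g_k\otimes x\otimes n_l)=\sum_\lambda \rho_{kl\lambda}\otimes x_{kl\lambda}\otimes u_{kl\lambda}$, then $\phi'(\theta)(x)=\sum_{k,l,\lambda} n_k^{\rho_{kl\lambda}}\, x_{kl\lambda}\, u_{kl\lambda}^{g_l}$. The identity $\phi\phi'={\rm id}$ is then the one-line computation $\phi\phi'(\theta)(\sigma\otimes x\otimes v)=\sum_{k,l}(\sigma\otimes 1\otimes n_k)\,\theta(g_k\otimes x\otimes n_l)\,(g_l\otimes 1\otimes v)=\theta(\sigma\otimes x\otimes v)$, which uses only the $B'$-$B'$-linearity of $\theta$. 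Your functorial phrasing gives (1) and (2) for free as full faithfulness of the bimodule equivalence $N^{*}\otimes_B-\otimes_B N$. For (3) and (4), where the ring structure enters, you need the explicit formula anyway.

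Several spots need repair when you write it out.
\begin{itemize}
\item The placeholder formula at the start of your inverse-map paragraph should be replaced by the contraction above.
\item In (3), ``$\phi'$ preserves these properties'' needs two checks. First, $\phi'(\theta)(1)=\sum_{k,l}n_k^{g_k}n_l^{g_l}=1$, because $g_k\otimes 1\otimes n_l\in B'$ is fixed by $\theta$. Second, multiplicativity follows by applying $\theta$ to $g_k\otimes xy\otimes n_l=\sum_s (g_k\otimes x\otimes n_s)(g_s\otimes y\otimes n_l)$ and contracting, as in Lemma~\ref{L5}.
\item In (4), the elements $f_j\otimes 1\otimes m_j$ are not idempotents in general, since $(f_j\otimes 1\otimes m_j)^2=f_j\otimes m_j^{f_j}\otimes m_j$. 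Only $\sum_j f_j\otimes 1\otimes m_j=1_{A'}$ holds, and that is all you use.
\item The components in (4) come from contracting with $m_j$ and $f_k$, not with $n_l,g_l$. For $a'=\sum_\lambda \rho_\lambda\otimes a_\lambda\otimes u_\lambda$,
\[
a'=\sum_{j,k}(f_j\otimes 1\otimes m_j)\,a'\,(f_k\otimes 1\otimes m_k)=\sum_{j,k} f_j\otimes x_{jk}\otimes m_k,\qquad x_{jk}=\sum_\lambda m_j^{\rho_\lambda}\,a_\lambda\, u_\lambda^{f_k}.
\]
\end{itemize}

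The $H$-invariance of $x_{jk}$ needs one more sentence. The map $c_{jk}:A'\to A$, $\rho\otimes a\otimes u\mapsto m_j^{\rho}\,a\,u^{f_k}$, is well defined because it is $B$-balanced. It satisfies $c_{jk}\circ\phi(h)=h\circ c_{jk}$ because $h$ is $B$-$B$-linear. Applying it to $\phi(h)(a')=a'$ gives $h(x_{jk})=x_{jk}$, hence $x_{jk}\in A^H=B$ and $a'\in B'$. Without naming such a map, ``$\phi(h)$ acts as $h$ in the middle slot'' is not enough, since the representation of $a'$ as a sum of tensors is not unique.
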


\begin{Lemma}\label{L3}{\rm \cite[Lemma 4]{I0}}
$\psi$ induces an additive group isomorphism $(A \otimes_B A)^A \cong (A' \otimes_{B'} A')^{A'}$.
\end{Lemma}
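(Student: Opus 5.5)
The plan is to derive the lemma from Morita theory: identify $A'\otimes_{B'}A'$ with a "conjugate" of $A\otimes_BA$, and then use the fact that a Morita equivalence of bimodule categories preserves the functor $S\mapsto S^A\cong{\rm Hom}({}_AA_A,{}_AS_A)$. Write $T=A\otimes_BA$. Using the identification $A'=N^{*}\otimes_BA\otimes_BN$ and the $B$-$B$-isomorphism $\xi:N\otimes_{B'}N^{*}\to B$, I obtain an additive isomorphism
$$\Theta: A'\otimes_{B'}A'=(N^{*}\otimes_BA\otimes_BN)\otimes_{B'}(N^{*}\otimes_BA\otimes_BN)\longrightarrow N^{*}\otimes_BT\otimes_BN,$$
given by $(\rho\otimes x\otimes u)\otimes(\sigma\otimes y\otimes v)\mapsto \rho\otimes x\otimes u^{\sigma}y\otimes v$. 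Give $N^{*}\otimes_BS\otimes_BN$ an $A'$-$A'$-bimodule structure, for any $A$-$A$-bimodule $S$, by the same formula as the multiplication of $N^{*}\otimes_BA\otimes_BN$:
$$(\rho\otimes x\otimes u)(\sigma\otimes s\otimes v)=\rho\otimes x u^{\sigma}s\otimes v,$$
and similarly on the right. Then $\Theta$ is an $A'$-$A'$-isomorphism. Since $\sum_k n_k^{g_k}=1$, we get
$$\Theta\psi(x\otimes y)=\sum_j f_j\otimes x\otimes y\otimes m_j.$$
So, up to $\Theta$, $\psi$ is the map $F:T\to N^{*}\otimes_BT\otimes_BN$, $t\mapsto\sum_j f_j\otimes t\otimes m_j$. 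In particular $\psi$ is well defined, which is otherwise the first thing one would have to check by hand.

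Next I check that $F$ maps $T^A$ into $(N^{*}\otimes_BT\otimes_BN)^{A'}$. Take $t\in T^A$; in particular $t$ commutes with $B$. Then
$$(\rho\otimes a\otimes u)F(t)=\sum_j\rho\otimes a u^{f_j}t\otimes m_j=\sum_j\rho\otimes a t\otimes u^{f_j}m_j=\rho\otimes at\otimes u,$$
using $bt=tb$ and $\sum_j u^{f_j}m_j=u$. The same computation gives $F(t)(\rho\otimes a\otimes u)=\rho\otimes ta\otimes u$, and $at=ta$, so the two agree.

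For bijectivity I construct the inverse by conjugating back with $N$. Define $G:N^{*}\otimes_BT\otimes_BN\to T$ by
$$\rho\otimes t\otimes u\mapsto\sum_{k,l}n_k^{\rho}\,t\,u^{g_l}\cdot(\text{correction}),$$
or, more cleanly, use the isomorphism $N\otimes_{B'}(N^{*}\otimes_BT\otimes_BN)\otimes_{B'}N^{*}\cong T$ induced by $\xi$ on both sides. Take $w\in(N^{*}\otimes_BT\otimes_BN)^{A'}$ and set
$$t_w=\sum_{k,l}\xi\bigl(n_k\otimes\,\cdot\,\bigr)\,w\,\xi\bigl(\cdot\otimes g_l\bigr)\in T,$$
that is, $t_w$ is the image of $\sum_{k,l}n_k\otimes w\otimes g_l$, suitably placed. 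I then show three things: that $t_w$ is $A$-central, that $t_{F(t)}=t$, and that $F(t_w)=w$. For the last identity the key tools are the centrality of $w$ against the elements $\varphi(a)$ and ${x'}_{(j,k)}$ with $x=1$, together with the relations $\sum_k n_k^{g_k}=1$ and $\sum_j n^{f_j}m_j=n$.

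An alternative, more conceptual route runs as follows. $S\mapsto N^{*}\otimes_BS\otimes_BN$ is an equivalence from $A$-$A$-bimodules to $A'$-$A'$-bimodules, and it sends $A$ to $A'$ via $\alpha$. Hence
$$S^A\cong{\rm Hom}({}_AA_A,{}_AS_A)\cong{\rm Hom}({}_{A'}A'_{A'},{}_{A'}F(S)_{A'})\cong F(S)^{A'},$$
and tracing $1\mapsto\varphi(1)$ shows that the composite is exactly $t\mapsto F(t)$. I expect the main obstacle to be verifying $F(t_w)=w$ concretely, or equivalently checking that this functor is an equivalence compatible with $\alpha$. I would handle it by using $\varphi(a)$ to move elements of $A$ across $w$, and the ${x'}_{(j,k)}$ to collapse the sums over $j,k$.
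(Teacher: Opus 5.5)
Your forward half is right, and it is the set-up the paper uses for the $B$-analogue, Lemma \ref{L4}. Your $\Theta$ is the paper's $\theta$ (note $xu^\sigma\otimes y=x\otimes u^\sigma y$), $\Theta\psi(x\otimes y)=\sum_j f_j\otimes x\otimes y\otimes m_j$, and your check that $F(T^A)$ consists of $A'$-central elements is correct.

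The explicit inverse, however, is wrong as written. Contracting $\sum_{k,l}n_k\otimes w\otimes g_l$ gives $t_w=\sum_{k,l}n_k^{\rho}\,t\,u^{g_l}$ for $w=\rho\otimes t\otimes u$. Then for $t\in T^B$ you get $t_{F(t)}=\bigl(\sum_{k,l}n_k^{g_l}\bigr)t$, which is not $t$ in general: take $N=B$, $n_1=n_2=1$, $g_1={\rm id}$, $g_2=0$, and you get $2t$.

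The correct contraction is the diagonal one, $G(\rho\otimes t\otimes u)=\sum_\ell n_\ell^{\rho}\,t\,u^{g_\ell}$, i.e.\ the image of $\sum_\ell n_\ell\otimes w\otimes g_\ell$. With it, the three checks go as follows.
\begin{itemize}
\item $GF(t)=\sum_{j,\ell}n_\ell^{f_j}\,t\,m_j^{g_\ell}=t$ for $t\in T^B$; this is where $B$-centrality of $t$ is used.
\item $FG(w)=\sum_{j,\ell}(f_j\otimes 1\otimes n_\ell)\,w\,(g_\ell\otimes 1\otimes m_j)=w$. This uses only that $w$ commutes with $g_\ell\otimes1\otimes m_j\in B'$.
\item $G(w)\in T^A$: apply the well-defined map $\rho\otimes t\otimes u\mapsto n_s^{\rho}\,t\,u^{g_\ell}$ to $(g_s\otimes a\otimes n_\ell)w=w(g_s\otimes a\otimes n_\ell)$ and sum over $s,\ell$.
\end{itemize}
This is exactly the paper's proof of Lemma \ref{L4} with $b\in B$ replaced by $a\in A$.

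The elements $\varphi(a)$ you propose to use are not the right tool. $\varphi$ is not multiplicative on $A$, and $G(\varphi(a)w)=\sum_{j,\ell}n_\ell^{f_j}\,a\,m_j^{\rho}\,t\,u^{g_\ell}$. This equals $a\,G(w)$ when $a\in V_A(B)$, but not in general.

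Your conceptual route, by contrast, is sound and gives a genuinely different complete proof once one routine identification is made. With $M=A\otimes_BN$ one has $M^{*}={\rm Hom}^r({}_AM,{}_AA)\cong N^{*}\otimes_BA$ as $A'$-$A$-bimodules. Hence $N^{*}\otimes_BS\otimes_BN\cong M^{*}\otimes_AS\otimes_AM$, with exactly the $A'$-actions you wrote. The functor $S\mapsto M^{*}\otimes_AS\otimes_AM$ is the standard Morita equivalence from $A$-$A$-bimodules to $A'$-$A'$-bimodules, with inverse $S'\mapsto M\otimes_{A'}S'\otimes_{A'}M^{*}$. Evaluating at $1_{A'}=\sum_jf_j\otimes1\otimes m_j$ yields exactly $F$ on $S^A$.

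The paper cites Ikehata for this lemma and proves its $B$-analogue by explicit dual-basis computations. Compared with that, your argument gives injectivity and surjectivity in one stroke, and it handles Lemmas \ref{L3} and \ref{L4} uniformly: the latter is the same argument for $B$-$B$-bimodules. The computational route, on the other hand, stays elementary and produces the inverse formula explicitly.
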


To show the main results (Theorem \ref{D2} and Theorem \ref{WS}), we need some lemmas. 
First, we shall show the following. 

\begin{Lemma}\label{L4}
$\psi$ induces an additive group isomorphism $(A \otimes_B A)^B \cong (A' \otimes_{B'} A')^{B'}$.
\end{Lemma}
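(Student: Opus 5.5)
We prove Lemma \ref{L4} by identifying $A'\otimes_{B'}A'$ with $N^{*}\otimes_B A\otimes_B A\otimes_B N$, so that $\psi$ becomes the map $t\mapsto \sum_j f_j\otimes t\otimes m_j$. This identification follows the pattern of Ikehata's argument for Lemma \ref{L3}. Under the identification of $A'$ with $N^{*}\otimes_B A\otimes_B N$, the product structure gives
\[
A'\otimes_{B'}A' \cong N^{*}\otimes_B A\otimes_B N\otimes_{B'}N^{*}\otimes_B A\otimes_B N .
\]
Using $\xi : N\otimes_{B'}N^{*}\cong B$, this becomes $N^{*}\otimes_B A\otimes_B A\otimes_B N$. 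Under this isomorphism, call it $\theta$,
\[
\psi(x\otimes y)=\sum_{j,k} f_j\otimes x\otimes n_k\otimes g_k\otimes y\otimes m_j \mapsto \sum_{j,k} f_j\otimes x\, n_k^{g_k}\otimes y\otimes m_j=\sum_j f_j\otimes x\otimes y\otimes m_j ,
\]
since $\sum_k n_k^{g_k}=1$. Hence $\theta\circ\psi$ is $t\mapsto\sum_j f_j\otimes t\otimes m_j$.

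The $B'$-$B'$-action on $N^{*}\otimes_B(A\otimes_BA)\otimes_BN$ is the obvious one through $N^{*}$ and $N$. The action of $b'=\sigma\otimes b\otimes v\in B'$ is transported via the identification of $B'$ with $N^{*}\otimes_B B\otimes_B N$, exactly as the multiplication rule above prescribes.

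\emph{Step 1: $\psi$ maps $(A\otimes_BA)^B$ into $(A'\otimes_{B'}A')^{B'}$.} Let $t\in(A\otimes_BA)^B$ and $b'=\sigma\otimes b\otimes v$. Compute both sides with the multiplication rule and $\sum_j n^{f_j}m_j=n$:
\[
b'\cdot\sum_j f_j\otimes t\otimes m_j=\sum_j\sigma\otimes b\,v^{f_j}\,t\otimes m_j ,
\qquad
\Big(\sum_j f_j\otimes t\otimes m_j\Big)b'=\sum_j f_j\otimes t\,m_j^{\sigma} b\otimes v .
\]
Since $t$ commutes with $B$, the left side equals $\sum_j\sigma\otimes t\,b\,v^{f_j}\otimes m_j$, which is $\sigma\otimes t b\otimes v$ after moving $v^{f_j}$ across the tensor sign. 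Similarly, $\sum_j f_j\otimes m_j^{\sigma}b\,t\otimes v$ equals $\sum_j f_j\cdot m_j^{\sigma}\otimes bt\otimes v$. Then $\sum_j f_j\cdot m_j^{\sigma}=\sigma$, which is the dual basis identity in $N^{*}$: it holds because $\eta^{-1}(1)=\sum_j f_j\otimes m_j$, or by evaluating at each $n$. So the right side equals $\sigma\otimes bt\otimes v$, and the two sides agree.

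\emph{Step 2: construct the inverse.} Define $\chi:N^{*}\otimes_B(A\otimes_BA)\otimes_BN\to A\otimes_BA$ on $(A'\otimes_{B'}A')^{B'}$ by
\[
\rho\otimes t\otimes u\mapsto \sum_k n_k^{\rho}\,t\,u^{g_k} ,
\]
or more invariantly by $\sum_{k,l} n_k^{\rho}\,t\,u^{g_l}$ with suitable indices. This needs care, since $\chi$ must be well defined over $B$ and must carry $B'$-centralizing elements to $B$-centralizing ones. A cleaner route is the sandwich $T\mapsto\sum_{k,l}(n_k\text{-part})\,T\,(g_l\text{-part})$, which is the same trick used to recover $A$ from $A'$ in Lemma \ref{L1}. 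For $T\in(A'\otimes_{B'}A')^{B'}$, the centralizing condition applied to $b'=g_k\otimes b\otimes n_l$ and $b'=f_j\otimes 1\otimes m_{j'}$ forces $T=\sum_j f_j\otimes t\otimes m_j$ with $t=\sum_{k,l}\ldots$ lying in $(A\otimes_BA)^B$.

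\emph{Step 3: check that the maps are mutually inverse.} Injectivity of $\psi$ on $(A\otimes_BA)^B$ is immediate: applying $\rho\otimes s\otimes u\mapsto\sum_k n_k^{\rho}s\,u^{g_k}$ recovers $t$ via $\sum_k n_k^{g_k}=1$. The main obstacle is surjectivity, that is, showing that every $B'$-centralizing element $T$ has the form $\sum_j f_j\otimes t\otimes m_j$. To do this, write $T$ using the identity element $1_{B'}=\sum_j f_j\otimes1\otimes m_j$ on both sides, so that $T=1_{B'}T1_{B'}$. Then use the centralizing condition with the elements $f_j\otimes 1\otimes n_k$ and $g_k\otimes1\otimes m_j$ of $B'$ to move the $N^{*}$ and $N$ components outward. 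This mirrors Ikehata's proof of Lemma \ref{L3}, with $A$ replaced by $B$ throughout. That replacement is legitimate because only elements of $B'$, coming from $N^{*}\otimes_B B\otimes_B N$, are used in the argument.
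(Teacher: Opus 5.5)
Your strategy is the paper's. You pass through $\theta : A'\otimes_{B'}A'\to N^{*}\otimes_B A\otimes_B A\otimes_B N$, under which $\psi(t)$ becomes $\sum_j f_j\otimes t\otimes m_j$. You then verify $B'$-centrality directly and use the contraction $\rho\otimes s\otimes u\mapsto\sum_k n_k^{\rho}s\,u^{g_k}$. Step~1 is correct. For injectivity, reducing $\sum_{j,k}n_k^{f_j}\,t\,m_j^{g_k}$ to $t$ uses $bt=tb$ (to bring $m_j^{g_k}$ to the left) and $\sum_j n_k^{f_j}m_j=n_k$, not just $\sum_k n_k^{g_k}=1$.

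The real gap is surjectivity, which you call the main obstacle but only describe. The preimage is left as ``$t=\sum_{k,l}\ldots$''. Your ``more invariant'' formula $\sum_{k,l}n_k^{\rho}t\,u^{g_l}$ is wrong, because the two sums decouple. Most importantly, you never show that the candidate commutes with $B$; this is the one place where $B'$-centrality of $T$ must be turned into $B$-centrality. Pointing to Ikehata's proof of Lemma~\ref{L3} ``with $A$ replaced by $B$'' does not replace this computation.

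Here is what is needed. Write $\theta(T)=\sum\rho\otimes s\otimes u$ with $s\in A\otimes_BA$, and put $t=\sum_k n_k^{\rho}s\,u^{g_k}$; this is your first formula and equals the paper's $X$. To get $t\in(A\otimes_BA)^B$, use the well-defined maps $\omega_{k\ell}:\rho\otimes s\otimes u\mapsto n_k^{\rho}s\,u^{g_\ell}$. Apply them to both sides of $\theta\bigl((g_k\otimes b\otimes n_\ell)T\bigr)=\theta\bigl(T(g_k\otimes b\otimes n_\ell)\bigr)$ and sum over $k,\ell$. The left side gives $\sum_{k,\ell}n_k^{g_k}\,b\,n_\ell^{\rho}s\,u^{g_\ell}=bt$, and the right side gives $\sum_{k,\ell}n_k^{\rho}s\,u^{g_k}\,b\,n_\ell^{g_\ell}=tb$. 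Then
\[
\theta\psi(t)=\sum_{j,k}(f_j\otimes1\otimes n_k)\,\theta(T)\,(g_k\otimes1\otimes m_j)=\sum_{j,k}(f_j\otimes1\otimes n_k)(g_k\otimes1\otimes m_j)\,\theta(T)=\theta(T).
\]
The middle equality uses $g_k\otimes1\otimes m_j\in B'$, and the last uses $\sum_{j,k}(f_j\otimes1\otimes n_k)(g_k\otimes1\otimes m_j)=\sum_j f_j\otimes1\otimes m_j=1$. With these two computations your argument coincides with the paper's and is complete.
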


\begin{proof} Noting that $N \otimes_{B'} N^{*} \cong B$, we consider a $B'$-$B'$-isomorphism 
$\theta : A' \otimes_{B'} A' \longrightarrow N^{*} \otimes_B A \otimes_B A \otimes_B N$ 
defined by $(\rho \otimes x \otimes u) \otimes (\sigma \otimes y \otimes v) \mapsto \rho \otimes x \cdot u^\sigma \otimes y \otimes v$ 
($\theta^{-1}$ is given by $\rho \otimes x \otimes y \otimes u \mapsto \sum_k 
(\rho \otimes x \otimes n_k) \otimes (g_k \otimes y \otimes u)$). 
Let $\sum_r x_r \otimes y_r$ be in $(A \otimes_BA)^B$. 
For any $\sum_{\ell} \rho_{\ell} \otimes b_{\ell} \otimes u_{\ell} \in B'$, 
we have
\begin{align*}
& \theta\left( \sum_{\ell} \rho_{\ell} \otimes b_{\ell} \otimes u_{\ell} \cdot \psi \left( \sum_r x_r \otimes y_r \right) \right)\\
=& \  \theta\left( \sum_{\ell} \rho_{\ell} \otimes b_{\ell} \otimes u_{\ell} \cdot \sum_{r,j,k} (f_j \otimes x_r \otimes n_k) \otimes (g_k \otimes y_r \otimes m_j) \right)\\
=& \ \theta\left( \sum_{r,\ell,j,k} (\rho_{\ell} \otimes b_{\ell} \cdot {u_{\ell}}^{f_j} \cdot x_r \otimes n_k) \otimes (g_k \otimes y_r \otimes m_j) \right)\\
=& \ \sum_{r,\ell,j} \rho_{\ell} \otimes b_{\ell} \cdot u_{\ell}^{f_j} \cdot x_r \otimes y_r \otimes m_j\\
=& \ \sum_{\ell, j} \rho_{\ell} \otimes \left( \sum_r x_r  \otimes y_r \right) \cdot  b_{\ell}  \otimes u_{\ell}^{f_j} \cdot m_j\\
=& \ \sum_{r,\ell} \rho_{\ell} \otimes  x_r  \otimes y_r \cdot b_{\ell}  \otimes u_{\ell},
\end{align*}
and 
\begin{align*}
& \theta\left( \psi \left( \sum_r x_r \otimes y_r \right) \cdot \sum_{\ell} \rho_{\ell} \otimes b_{\ell} \otimes u_{\ell} \right)\\
=& \ \theta\left( \sum_{r,j,k} (f_j \otimes x_r \otimes n_k) \otimes (g_k \otimes y_r \otimes m_j) 
\cdot \sum_{\ell} \rho_{\ell} \otimes b_{\ell} \otimes u_{\ell} \right)\\
=& \ \theta\left( \sum_{r,\ell,j,k} (f_j \otimes x_r \otimes n_k) \otimes (g_k \otimes y_r \cdot  {m_j}^{\rho_{\ell}} \cdot b_{\ell} \otimes u_{\ell}) \right)\\
=& \ \sum_{r,\ell,j} f_j \otimes x_r \otimes  y_r \cdot  {m_j}^{\rho_{\ell}} \cdot b_{\ell} \otimes u_{\ell}\\
=& \ \sum_{\ell,j} f_j \cdot  {m_j}^{\rho_{\ell}}  \otimes \left( \sum_r x_r \otimes  y_r \right) \cdot  b_{\ell} \otimes u_{\ell}\\
=& \ \sum_{r,\ell} \rho_{\ell} \otimes  x_r  \otimes y_r \cdot b_{\ell}  \otimes u_{\ell}.
\end{align*}
It follows that $\sum_{\ell} h_{\ell} \otimes b_{\ell} \otimes u_{\ell} \cdot \psi \left( \sum_r x_r \otimes y_r \right)=\psi \left( \sum_r x_r \otimes y_r \right) \cdot \sum_{\ell} h_{\ell} \otimes b_{\ell} \otimes u_{\ell}$.
Thus, $\psi$ induces an additive group homomorphism from $(A \otimes_BA)^B$ to $(A' \otimes_{B'}{A'})^{B'}$. 
Assume that $\psi \left( \sum_r x_r \otimes y_r \right)=0$. Since  
\begin{align*}
0 = \theta \left( \psi \left( \sum_r  x_r \otimes y_r \right) \right)
&= \theta \left( \sum_{r,j,k} (f_j \otimes x_r \otimes n_k) \otimes (g_k \otimes y_r \otimes m_j) \right)\\
&= \sum_{r,j} f_j \otimes x_r \otimes y_r \otimes m_j,
\end{align*}
we obtain
\begin{align*}
0 = \sum_{r,j,k} {n_k}^{f_j} \cdot x_r \otimes y_r \cdot {m_j}^{g_k}
= \sum_{j,k} {n_k}^{f_j} \cdot {m_j}^{g_k} \left( \sum_r  x_r \otimes y_r \right)
= \sum_r x_r \otimes y_r.
\end{align*}
Finally, we shall prove $\psi\left( (A\otimes_BA)^B \right)=(A'\otimes_{B'}A')^{B'}$. 
Let $X'=\sum_r\left( \sum_\lambda \rho_{r \lambda} \otimes x_{r \lambda} \otimes u_{r \lambda} \right) \otimes 
\left( \sum_\mu \sigma_{r \mu} \otimes y_{r \mu} \otimes v_{r \mu} \right)$ be in $(A'\otimes_{B'}A')^{B'}$. 
Obviously, $(g_s \otimes b \otimes n_\ell) X'= X' (g_s \otimes b \otimes n_\ell) $ for any $b \in B$. 
Since 
\begin{align*}
(g_s \otimes b \otimes n_\ell) X' 
&= \sum_{r, \lambda, \mu} (g_s \otimes b \cdot {n_\ell}^{\rho_{r \lambda}} \cdot x_{r \lambda} \otimes u_{r \lambda})\otimes 
(\sigma_{r \mu} \otimes y_{r \mu} \otimes v_{r \mu})
\end{align*}
and
\begin{align*}
X'(g_s \otimes b \otimes n_\ell)
= \sum_{r, \lambda, \mu} (\rho_{r \lambda} \otimes x_{r \lambda} \otimes u_{r \lambda}) \otimes
(\sigma_{r \mu} \otimes y_{r \mu} \cdot {v_{r \mu}}^{g_s} \cdot b \otimes n_\ell),
\end{align*}
we can see that
$$
b \left( \sum_{r, \lambda, \mu,\ell} {n_\ell}^{\rho_{r \lambda}} \cdot x_{r \lambda} \cdot {u_{r \lambda}}^{\sigma_{r \mu}}
 \otimes y_{r \mu} \cdot {v_{r \mu}}^{g_\ell} \right)
 =\left( \sum_{r, \lambda, \mu, s} {n_s}^{\rho_{r \lambda}} \cdot x_{r \lambda} \cdot {u_{r \lambda}}^{\sigma_{r \mu}}
 \otimes y_{r \mu} \cdot {v_{r \mu}}^{g_s} \right) b.
$$
Hence, $X=\sum_{r, \lambda, \mu, \ell} {n_\ell}^{\rho_{r \lambda}} \cdot x_{r \lambda} \cdot {u_{r \lambda}}^{\sigma_{r \mu}}
 \otimes y_{r \mu} \cdot {v_{r \mu}}^{g_\ell}$ is in $(A \otimes_B A)^B$. Moreover, 
\begin{align*}
\psi(X)
 &= \sum_{r, \lambda, \mu, \ell, j, k} (f_j \otimes {n_\ell}^{\rho_{r \lambda}} \cdot x_{r \lambda} \cdot {u_{r \lambda}}^{\sigma_{r \mu}} \otimes n_k)
\otimes (g_k \otimes y_{r \mu} \cdot {v_{r \mu}}^{g_\ell} \otimes m_j)\\
&= \sum_{r, \lambda, \mu, \ell, j, k} (f_j \otimes {n_\ell}^{\rho_{r \lambda}} \cdot x_{r \lambda} \otimes u_{r \lambda})
(\sigma_{r \mu} \otimes 1  \otimes n_k)
\otimes (g_k \otimes y_{r \mu} \cdot {v_{r \mu}}^{g_\ell} \otimes m_j)\\
&= \sum_{r, \lambda, \mu, \ell, j, k} (f_j \otimes {n_\ell}^{\rho_{r \lambda}} \cdot x_{r \lambda} \otimes u_{r \lambda})
\otimes (\sigma_{r \mu} \otimes 1  \otimes n_k)
 (g_k \otimes y_{r \mu} \cdot {v_{r \mu}}^{g_\ell} \otimes m_j)\\
&= \sum_{r, \lambda, \mu, \ell, j} (f_j \otimes {n_\ell}^{\rho_{r \lambda}} \cdot x_{r \lambda} \otimes u_{r \lambda})
\otimes (\sigma_{r \mu} \otimes y_{r \mu} \cdot {v_{r \mu}}^{g_\ell} \otimes m_j)\\
&= \sum_{r, \lambda, \mu, \ell, j} (f_j \otimes 1 \otimes n_\ell)(\rho_{r \lambda} \otimes x_{r \lambda} \otimes u_{r \lambda})
\otimes (\sigma_{r \mu} \otimes y_{r \mu} \otimes v_{r \mu})(g_\ell \otimes 1 \otimes m_j)\\
&= \sum_{\ell, j} (f_j \otimes 1 \otimes n_\ell)(g_\ell \otimes 1 \otimes m_j)X'\\
&= X'.
\end{align*}
This completes the proof.
\end{proof}

Let $S$ be an $A$-$A$-module, and $S'=N^{*} \otimes_B S \otimes_B N$. 
Then $S'$ is an $A'$-$A'$-module. 
We set
\begin{align*}
{\rm Der}_B (A, S) &= \{ D \, | \, D:A \longrightarrow S, \ D {\rm \ is \ a} \ B{\rm \mathchar`- derivation}\}, \ {\rm and} \\
{\rm Der}_{B'} (A', S') &= \{ D' \, | \, D':A' \longrightarrow S', \ D' {\rm \  is \ a} \ B'{\rm \mathchar`- derivation}\}.
\end{align*}

Then we shall prove the following. 

\begin{Lemma}\label{L5}
$\phi$ induces an additive group isomorphism 
${\rm Der}_B (A, S) \cong {\rm Der}_{B'} (A', S')$. 
\end{Lemma}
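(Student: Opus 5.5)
The plan is to transport derivations along the identifications $A' = N^{*}\otimes_B A\otimes_B N$ and $S' = N^{*}\otimes_B S\otimes_B N$, exactly as $\phi$ does for $B$-$B$-endomorphisms. For $D\in {\rm Der}_B(A,S)$ we put
$$\phi(D) = 1\otimes D\otimes 1 : N^{*}\otimes_B A\otimes_B N \longrightarrow N^{*}\otimes_B S\otimes_B N,\quad \rho\otimes x\otimes u \mapsto \rho\otimes D(x)\otimes u .$$
Because $D$ is a left and right $B$-homomorphism ($D(bx)=D(b)x+bD(x)=bD(x)$, and similarly on the right), this map is well defined. It is clearly additive in $D$.

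First we check that $\phi(D)$ is a $B'$-derivation. The products in $A'$ and $S'$ are given by $(\rho\otimes x\otimes u)(\sigma\otimes y\otimes v)=\rho\otimes x\, u^\sigma y\otimes v$, and likewise for the bimodule actions on $S'$. Using $D(x\,u^\sigma y)=D(x)\,u^\sigma y + x\,u^\sigma D(y)$, which holds since $u^\sigma\in B$, we get the Leibniz rule for $\phi(D)$. Moreover $B'$ is identified with $N^{*}\otimes_B B\otimes_B N$, and $D(B)=0$, so $\phi(D)$ vanishes on $B'$.

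For the inverse we use the symmetry of Morita equivalence: $N^{*}$ and $N$ play symmetric roles, and $N\otimes_{B'}N^{*}\cong B$ via $\xi$. Given $D'\in{\rm Der}_{B'}(A',S')$, we define $D:A\to S$ by the explicit formula
$$D(x)=\sum_{j,k} \xi\text{-contraction of } n_k\otimes D'(f_j\otimes x\otimes n_k)\ \text{hmm};$$
more concretely,
$$D(x)=\sum_{j,k} {n_k}^{\rho}\cdot s\cdot {u}^{g_k}\ \text{ applied to } D'({x'}_{(j,k)}),$$
where $\rho\otimes s\otimes u\mapsto$ is contracted using $n_k$ on the left and $g_k$ on the right. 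The guiding principle is the formula in the injectivity part of Lemma \ref{L4}, namely $\sum_{j,k}{n_k}^{f_j}\,(\cdot)\,{m_j}^{g_k}$. Equivalently, we set
$$D(x)=\sum_{k,\ell} \langle n_k \,|\, D'(g_k\otimes x\otimes n_\ell)\,|\, g_\ell\rangle ,$$
where $\langle n|\rho\otimes s\otimes u|g\rangle = n^\rho\, s\, u^g$.

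We then verify three things. First, $D$ is a $B$-derivation: additivity is clear, and vanishing on $B$ follows since $g_k\otimes b\otimes n_\ell\in B'$. For the Leibniz rule we write $g_k\otimes xy\otimes n_\ell=\sum_m (g_k\otimes x\otimes n_m)(g_m\otimes y\otimes n_\ell)$, which holds because $\sum_m {n_m}^{g_m}=1$. We then apply Leibniz for $D'$ and contract, using $\sum_k {n_k}^{g_k}=1$ and the dual basis identity $\sum_j n^{f_j}m_j=n$. Second, the composites are identities: $\phi$ applied to this $D$ recovers $D'$, and contracting $\phi(D)$ recovers $D$. The latter is a direct computation with $\sum {n_k}^{g_k}=1$.

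The main obstacle is showing that $\phi(D)=D'$ when $D$ is built from $D'$. The plan there is to exploit that $D'$ is $B'$-bilinear, which follows from the derivation property together with $D'(B')=0. We write an arbitrary element as
$$\rho\otimes x\otimes u=(\rho\otimes 1\otimes n_k)(g_k\otimes x\otimes n_\ell)(g_\ell\otimes 1\otimes u),$$
summed over $k,\ell$, where the outer factors lie in $B'$. This gives
$$D'(\rho\otimes x\otimes u)=(\rho\otimes1\otimes n_k)\,D'(g_k\otimes x\otimes n_\ell)\,(g_\ell\otimes1\otimes u).$$
Multiplying out this outer action in $S'$ yields exactly $\rho\otimes D(x)\otimes u$. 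Hence $\phi$ is bijective, and it is an additive group isomorphism ${\rm Der}_B(A,S)\cong{\rm Der}_{B'}(A',S')$.
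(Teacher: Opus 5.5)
Your proof is correct and matches the paper's argument. Both use $\phi(D)=1\otimes D\otimes 1$ and the same inverse $x\mapsto\sum_{k,\ell}\langle n_k\,|\,D'(g_k\otimes x\otimes n_\ell)\,|\,g_\ell\rangle$, which is the paper's $\widehat{D'}$. Both use the factorization $g_k\otimes xy\otimes n_\ell=\sum_s(g_k\otimes x\otimes n_s)(g_s\otimes y\otimes n_\ell)$ for the Leibniz rule. Both get $\phi\pi=1$ from the $B'$-bilinearity of $D'$ applied to $\sigma\otimes x\otimes v=\sum_{k,\ell}(\sigma\otimes 1\otimes n_k)(g_k\otimes x\otimes n_\ell)(g_\ell\otimes 1\otimes v)$. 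In a write-up, delete the abandoned first formula via $D'({x'}_{(j,k)})$, since it is not well specified and the ``equivalently'' linking it to the correct definition is unjustified. Also note that the Leibniz check needs only $\sum_k {n_k}^{g_k}=1$, not the identity $\sum_j n^{f_j}m_j=n$.
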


\begin{proof} 
Let $D$ be in ${\rm Der}_B (A, S)$. For any $x'=\sum_\lambda \rho_\lambda \otimes x_\lambda \otimes u_\lambda$ and 
$y'=\sum_\mu \sigma_\mu \otimes y_\mu \otimes v_\mu \in A'$, we obtain
\begin{align*}
&\phi(D)(x'y')\\
&=\phi(D)\left( \sum_{\lambda,\mu} \rho_\lambda \otimes x_\lambda \cdot {u_\lambda}^{\sigma_\mu} \cdot y_\mu \otimes v_\mu \right)\\
&=  \sum_{\lambda, \mu} \rho_\lambda \otimes D(x_\lambda \cdot {u_\lambda}^{\sigma_\mu} \cdot y_\mu) \otimes v_\mu\\
&=  \sum_{\lambda, \mu} \rho_\lambda \otimes 
\left( D(x_\lambda) \cdot {u_\lambda}^{\sigma_\mu} \cdot y_\mu + x_\lambda \cdot {u_\lambda}^{\sigma_\mu} \cdot D(y_\mu) \right) \otimes v_\mu\\
&=  \sum_{\lambda, \mu}( \rho_\lambda \otimes D(x_\lambda) \cdot {u_\lambda}^{\sigma_\mu} \cdot y_\mu \otimes v_\mu
+ \rho_\lambda \otimes x_\lambda \cdot {u_\lambda}^{\sigma_\mu} \cdot D(y_\mu) \otimes v_\mu)\\
&=  \sum_{\lambda, \mu} \left( \left( \rho_\lambda \otimes D(x_\lambda) \otimes {u_\lambda} \right)(\sigma_\mu \otimes y_\mu \otimes v_\mu)
+ (\rho_\lambda \otimes x_\lambda \otimes u_\lambda)\left( \sigma_\mu \otimes D(y_\mu) \otimes v_\mu \right) \right)\\
&= \phi(D)(x')y'+x'\phi(D)(y').
\end{align*}
Moreover, it is obvious that $\phi(D)(B')=0$. 
Thus, $\phi$ induces an additive group homomorphism from 
${\rm Der}_B (A, S)$ to ${\rm Der}_{B'} (A', S')$. 
For any $D' \in {\rm Der}_{B'} (A', S')$,  
 we can  define an additive homomorphism $\widehat{D'}: A \longrightarrow S$ as follows : 
$\widehat{D'}(x)= \sum_{\lambda, k, \ell} {n_k}^{\rho_{ k \ell \lambda}} \cdot x_{ k \ell \lambda} \cdot {u_{ k \ell \lambda}}^{g_\ell}$, where 
$D'(g_k \otimes x \otimes n_\ell) = \sum_\lambda \rho_{ k \ell \lambda}  \otimes x_{ k \ell \lambda} \otimes u_{k \ell \lambda}$.  
Then we consider the additive homomorphism  $\pi :{\rm Der}_{B'} (A', S') \longrightarrow {\rm Hom}(A, S)$ defined by $\pi(D')=\widehat{D'}$. 
For any $D' \in  {\rm Der}_{B'} (A', S')$ and $x$, $y \in A$, we have
\begin{align*}
&D'(g_k \otimes xy \otimes n_\ell) \\
&= \, D'\left( \sum_s (g_k \otimes x \otimes n_s)(g_s \otimes y \otimes n_\ell) \right)\\
&= \, \sum_s\left( D'(g_k \otimes x \otimes n_s)(g_s \otimes y \otimes n_\ell) + (g_k \otimes x \otimes n_s)D'(g_s \otimes y \otimes n_\ell)   \right)\\
&= \, \sum_{\lambda,\mu,s} \left( (\rho_{k s \lambda} \otimes x_{k s \lambda} \otimes u_{k s \lambda})(g_s \otimes y \otimes n_\ell)
+ (g_k \otimes x \otimes n_s)(  \rho_{ s \ell \mu} \otimes y_{ s \ell \mu} \otimes u_{ s \ell \mu}     ) \right)\\
&= \, \sum_{\lambda , \mu, s} ( \rho_{k s \lambda} \otimes x_{k s \lambda} \cdot {u_{k s \lambda}}^{g_s} \cdot y \otimes n_\ell
+ g_k \otimes x \cdot {n_s}^{\rho_{ s \ell \mu}} \cdot y_{ s \ell \mu} \otimes u_{ s \ell \mu}).
\end{align*}
It follows that 
\begin{align*}
\pi(D')(xy) 
&= \sum_{\lambda , \mu, s, k, \ell} ( {n_k}^{\rho_{k s \lambda}} \cdot x_{k s \lambda} \cdot {u_{k s \lambda}}^{g_s} \cdot y \cdot {n_\ell}^{g_\ell}
+ {n_k}^{g_k} \cdot x \cdot {n_s}^{\rho_{ s \ell \mu}} \cdot y_{ s \ell \mu} \cdot {u_{ s \ell \mu}}^{g_\ell})\\
&= \left( \sum_{\lambda , \mu, s, k}  {n_k}^{\rho_{k s \lambda}} \cdot x_{k s \lambda} \cdot {u_{k s \lambda}}^{g_s} \right) y
+ x\left( \sum_{\lambda , \mu, s,  \ell}  {n_s}^{\rho_{ s \ell \mu}} \cdot y_{ s \ell \mu} \cdot {u_{ s \ell \mu}}^{g_\ell} \right)\\
&=\pi(D')(x)y + x \pi(D')(y).
\end{align*}
Moreover, it is clear that $\pi(D')(B)=0$. 
Thus, $\pi$ is an additive homomorphism from ${\rm Der}_{B'} (A', S')$ to ${\rm Der}_B (A, S)$. 
For any $D \in {\rm Der}_B (A, S)$ and $x \in A$,  
we see that $\pi \left( \phi(D) \right)(x) = D(x)$, and hence $\pi\phi=1_{{\rm Der}_B (A, S)}$.
For any $D' \in {\rm Der}_{B'} (A', S')$, we obtain 
\begin{align*}
&\phi\left( \pi(D') \right)(\sigma \otimes x \otimes v)\\
&= \,  \sigma \otimes \pi(D')(x) \otimes v\\
&= \,  \sigma \otimes \left( \sum_{\lambda, k, \ell} {n_k}^{\rho_{ k \ell \lambda}}  \cdot x_{ k \ell \lambda} \cdot {u_{k \ell \lambda}}^{g_\ell} \right)\otimes v\\
&= \, \sum_{k, \ell} (\sigma \otimes 1 \otimes  n_k)\left( \sum_\lambda \rho_{ k \ell \lambda} \otimes x_{ k \ell \lambda} \otimes u_{k \ell \lambda} \right)
(g_\ell \otimes 1 \otimes v)\\
&= \, \sum_{k, \ell} (\sigma \otimes 1 \otimes  n_k)D'(g_k \otimes x \otimes n_\ell)
(g_\ell \otimes 1 \otimes v)\\
&= \, D'\left( \sum_{k, \ell} (\sigma \otimes 1 \otimes  n_k)(g_k \otimes x \otimes n_\ell)
(g_\ell \otimes 1 \otimes v) \right)\\
&= \, D'(\sigma \otimes x \otimes v).
\end{align*}
This implies $\phi \pi =1_{{\rm Der}_{B'} (A', S')}$.  
This completes the proof.
\end{proof}

\section{Main results}

In the case of $A/B \sim A'/B'$, the conventions and notations employed in the preceding section 
will be used in this section. First, we shall state the following.

\begin{Proposition}
The class of trivial extensions is Morita invariant. 
\end{Proposition}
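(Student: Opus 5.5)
We use the identification $A' = N^{*}\otimes_B A\otimes_B N$ and $B' = N^{*}\otimes_B B\otimes_B N$ from Section 2, realized by the ring isomorphism $\alpha$. Suppose $A/B$ is trivial, so $A = B\oplus S$ with $S$ a $B$-$B$-submodule satisfying $S\cdot S = 0$; the multiplication rule $(b,s)(c,t)=(bc,bt+sc)$ says exactly this. The natural candidate is $S' = N^{*}\otimes_B S\otimes_B N$, viewed inside $A'$.

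The steps, in order, are as follows.

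\textbf{Step 1.} Since $N^{*}\otimes_B -\otimes_B N$ is an additive functor on $B$-$B$-bimodules, the decomposition ${}_BA_B = {}_BB_B\oplus {}_BS_B$ gives a $B'$-$B'$-decomposition
$$N^{*}\otimes_B A\otimes_B N = (N^{*}\otimes_B B\otimes_B N)\oplus (N^{*}\otimes_B S\otimes_B N).$$
Hence $A' = B'\oplus S'$ as $B'$-$B'$-modules. Here $S'$ is the image of $N^{*}\otimes_B S\otimes_B N$, which embeds because tensor products preserve direct summands.

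\textbf{Step 2.} Check that the ring multiplication of $A'$ has the required form. From the formula $(\rho\otimes x\otimes u)(\sigma\otimes y\otimes v) = \rho\otimes x\cdot u^\sigma\cdot y\otimes v$ we get:
\begin{itemize}
\item if $x,y\in S$, then $x\cdot u^\sigma\cdot y\in S\cdot B\cdot S = S\cdot S = 0$, so $S'S'=0$;
\item if $x\in B$ and $y\in S$ (or the reverse), then $x\, u^\sigma\, y\in S$;
\item if $x,y\in B$, the product lies in $B$.
\end{itemize}
Therefore $(b'+s')(c'+t') = b'c' + (b't'+s'c')$ with $b'c'\in B'$, $b't'+s'c'\in S'$ and $s't'=0$. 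This is precisely the trivial-extension multiplication, and the left and right actions of $B'$ on $S'$ are the bimodule actions induced from $A'$.

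\textbf{Step 3.} Transport along $\alpha$. The statement lives in $A'={\rm End}^r({}_AA\otimes_BN)$, but $\alpha$ is a ring isomorphism carrying $N^{*}\otimes_B B\otimes_B N$ onto $B'$. So $A'/B'$ is isomorphic as an extension to $(N^{*}\otimes_B A\otimes_B N)/(N^{*}\otimes_B B\otimes_B N)$, and being trivial is preserved under such isomorphisms.

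The main point requiring care is Step 1. The sum must be direct inside $A'$, and the map $N^{*}\otimes_B S\otimes_B N\to N^{*}\otimes_B A\otimes_B N$ must be injective. Both follow from the splitting of $S$ as a $B$-$B$-summand of $A$: the projection $A\to S$ is a $B$-$B$-map, and tensoring it gives a retraction. The remaining verifications are routine computations with the multiplication formula.
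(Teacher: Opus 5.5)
Your proof is correct and follows the same approach as the paper. You identify $A'$ with $N^{*}\otimes_B A\otimes_B N$, take $S'=N^{*}\otimes_B S\otimes_B N$, and verify the trivial-extension multiplication from the formula $(\rho\otimes x\otimes u)(\sigma\otimes y\otimes v)=\rho\otimes x\cdot u^\sigma\cdot y\otimes v$. The differences are only in presentation: you justify that $A'=B'\oplus S'$ is direct (which the paper calls obvious), and you check the products $B'B'$, $B'S'$, $S'B'$, $S'S'$ separately, whereas the paper does one computation with common indices $\rho_\lambda, u_\lambda$.
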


\begin{proof}  Assume that $A/B \sim A'/B'$ and $A/B$ is a trivial extension, that is, 
there is a $B$-$B$-module $S$ such that $A=B \oplus S$
 and the multiplication in $A$ is given by $(b,s)(c,t)=(bc,bt+sc)$. 
Obviously, $A'= B' \oplus S'$. 
Let $(b',s')=\left( \sum_\lambda\rho_\lambda \otimes b_\lambda \otimes u_\lambda,
 \, \sum_\lambda \rho_\lambda \otimes s_\lambda \otimes u_\lambda \right)$
 and $(c',t')=\left( \sum_\mu \sigma_\mu \otimes c_\mu \otimes v_\mu, \, 
\sum_\mu \sigma_\mu \otimes t_\mu \otimes v_\mu \right)$ 
be arbitrary elements in $A'$. 
Since 
$b'c'=\sum_{\lambda,\mu} \rho_\lambda \otimes b_\lambda \cdot {u_\lambda}^{\sigma_\mu} \cdot c_\mu \otimes v_\mu$ and
$b't'+s'c'=\sum_{\lambda,\mu}\rho_\lambda \otimes (b_\lambda \cdot {u_\lambda}^{\sigma_\mu} \cdot t_\mu + s_\lambda \cdot 
{u_\lambda}^{\sigma_\mu} \cdot c_\mu) \otimes v_\mu$,  
we have 
\begin{align*}
&(b',s')(c',t')\\
&= \left( \sum_\lambda \rho_\lambda \otimes (b_\lambda, s_\lambda) \otimes u_\lambda \right)
\left( \sum_\mu \sigma_\mu \otimes (c_\mu, t_\mu) \otimes v_\mu \right)\\
&= \sum_{\lambda,\mu}  \rho_\lambda \otimes (b_\lambda, s_\lambda) 
 ({u_\lambda}^{\sigma_\mu} \cdot c_\mu, \ {u_\lambda}^{\sigma_\mu} \cdot t_\mu) \otimes v_\mu\\
&= \sum_{\lambda,\mu}  \rho_\lambda \otimes (b_\lambda \cdot {u_\lambda}^{\sigma_\mu} \cdot c_\mu, \ 
b_\lambda \cdot {u_\lambda}^{\sigma_\mu} \cdot t_\mu + s_\lambda \cdot {u_\lambda}^{\sigma_\mu} \cdot c_\mu) \otimes v_\mu\\
&= \sum_{\lambda,\mu}  (\rho_\lambda \otimes b_\lambda \cdot {u_\lambda}^{\sigma_\mu} \cdot c_\mu \otimes v_\mu, \ 
\rho_\lambda \otimes (b_\lambda \cdot {u_\lambda}^{\sigma_\mu} \cdot t_\mu + s_\lambda \cdot 
{u_\lambda}^{\sigma_\mu} \cdot c_\mu) \otimes v_\mu)\\
&= (b'c', b't'+s'c').
\end{align*}
This completes the proof.
\end{proof}

In \cite{RS}, J. C. Robson and L. W. Small studied liberal extensions. 
Concerning liberal extensions, we shall show the following. 

\begin{Proposition}
The class of liberal extensions is Morita invariant.
\end{Proposition}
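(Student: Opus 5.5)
Suppose $A/B \sim A'/B'$ and $A/B$ is liberal, say $A=\sum_{i=1}^n v_iB$ with $v_i \in V_A(B)$. We identify $A'$ with $N^{*}\otimes_B A\otimes_B N$ and $B'$ with $N^{*}\otimes_B B\otimes_B N$, as in Section 2. The natural candidates for generators of $A'$ over $B'$ are the images $\varphi(v_i)=\sum_j f_j\otimes v_i\otimes m_j$. By Lemma \ref{L1}, $\varphi$ maps $V_A(B)$ isomorphically onto $V_{A'}(B')$, so each $\varphi(v_i)$ lies in $V_{A'}(B')$. It therefore remains only to show that $A'=\sum_i \varphi(v_i)B'$.

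To do this, I take an arbitrary generator $\rho\otimes x\otimes u$ of $A'$ and write $x=\sum_i v_ib_i$ with $b_i\in B$. Since $v_i$ centralizes $B$, we have $\rho\otimes v_ib_i\otimes u=\rho\otimes b_iv_i\otimes u$. Using $\rho\in N^{*}=\mathrm{Hom}^r({}_BN,{}_BB)$, the balanced tensor then lets me move $b_i$ into $\rho$. So it suffices to show that each element $\sigma\otimes v_i\otimes u$ lies in $\varphi(v_i)B'$.

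For this I compute in the ring $N^{*}\otimes_BA\otimes_BN$:
\[
\varphi(v_i)\,(\sigma\otimes 1\otimes u)=\sum_j f_j\otimes v_i\cdot m_j^{\sigma}\otimes u=\sum_j f_j\cdot m_j^{\sigma}\otimes v_i\otimes u ,
\]
where the second equality uses again that $m_j^{\sigma}\in B$ commutes with $v_i$. The map $\sum_j f_j\cdot m_j^{\sigma}$ equals $\sigma$, since it sends $n\mapsto\sum_j n^{f_j}m_j^{\sigma}=(\sum_j n^{f_j}m_j)^{\sigma}=n^{\sigma}$. Hence
\[
\sigma\otimes v_i\otimes u=\varphi(v_i)(\sigma\otimes1\otimes u)\in\varphi(v_i)B'.
\]

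Combining the two steps gives $A'=\sum_i\varphi(v_i)B'$ with $\varphi(v_i)\in V_{A'}(B')$, so $A'/B'$ is liberal. The only delicate point is bookkeeping: we must check that moving $b_i$ and $m_j^{\sigma}$ across $v_i$ inside the triple tensor product is legitimate. This is exactly where the hypothesis $v_i\in V_A(B)$ is used, together with the identity $\sum_j f_j\cdot m_j^{\sigma}=\sigma$, which comes from the dual-basis relation $\sum_j n^{f_j}m_j=n$. No other obstacle is expected.
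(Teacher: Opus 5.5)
Your proposal is correct and follows essentially the same route as the paper: both take $\varphi(v_i)\in V_{A'}(B')$ from Lemma \ref{L1} and use $\sum_j f_j\cdot m_j^{\rho}=\rho$ together with $v_i\in V_A(B)$ to write each element of $A'$ as $\sum_i \varphi(v_i)\,b'_i$ with $b'_i\in B'$. The only cosmetic difference is that you first move $b_i$ into $\rho$ and multiply by $\rho b_i\otimes 1\otimes u$, while the paper keeps $b_{\lambda i}$ in the middle slot as $\rho_\lambda\otimes b_{\lambda i}\otimes u_\lambda$; these are the same element of $B'$.
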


\begin{proof}  Assume that $A/B \sim A'/B'$ and $A/B$ is a liberal extension, that is, 
there is a finite set of elements $\{v_1, \ v_2, \ \cdots, \ v_n\}$ of  $V_A(B)$ such that $A=\sum_{i=1}^n v_iB$. 
By Lemma \ref{L1}, $v_i' = \sum_j f_j \otimes v_i \otimes m_j$  is in $V_{A'}(B')$.  
Let $x'=\sum_\lambda \rho_\lambda \otimes x_\lambda \otimes u_\lambda$ be in $A'$, 
and $x_\lambda = \sum_{i=1}^n v_i b_{\lambda i}$ ($b_{\lambda i} \in B$). 
Then we have
\begin{align*}
x'&= \sum_\lambda \rho_\lambda \otimes \left( \sum_{i=1}^n v_i b_{\lambda i} \right) \otimes u_\lambda\\
&= \sum_{i=1}^n \sum_\lambda \left( \sum_j f_j \cdot {m_j}^{\rho_\lambda} \right) \otimes v_i b_{\lambda i}\otimes u_\lambda\\
&= \sum_{i=1}^n \sum_\lambda \sum_j f_j \otimes v_i \cdot {m_j}^{\rho_\lambda} \cdot b_{\lambda i}\otimes u_\lambda\\
&= \sum_{i=1}^n \sum_\lambda \sum_j (f_j \otimes v_i \otimes m_j)(\rho_\lambda \otimes b_{\lambda i}\otimes u_\lambda)\\
&= \sum_{i=1}^n v'_i  \left( \sum_\lambda \rho_\lambda \otimes b_{\lambda i} \otimes u_\lambda \right).
\end{align*}
It follows that $A'=\sum_i v'_i B'$. This completes the proof.
\end{proof}

In \cite{KS}, L. Kadison and K. Szlach{\'a}nyi studied left and right depth two extensions 
and they gave  Proposition \ref{DT}. 
Then  we shall prove the following.  

\begin{Theorem}\label{D2}
The classes of left depth two extensions  and right depth two extensions are Morita invariant, respectively. 
\end{Theorem}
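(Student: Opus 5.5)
We use the characterization of Proposition \ref{DT} together with the transfer maps $\psi$ and $\phi$. Assume $A/B \sim A'/B'$ and that $A/B$ is left depth two. By Proposition \ref{DT} there are $t_i \in (A\otimes_BA)^B$ and $\beta_i \in {\rm End}^\ell({}_BA_B)$ with $\sum_i t_i\beta_i(x)y = x\otimes y$ for all $x,y\in A$. We set $t_i' = \psi(t_i)$ and $\beta_i' = \phi(\beta_i) = 1\otimes\beta_i\otimes 1$. By Lemma \ref{L4}, $t_i' \in (A'\otimes_{B'}A')^{B'}$, and by Lemma \ref{L2}(1), $\beta_i' \in {\rm End}^\ell({}_{B'}A'_{B'})$. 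It then suffices to verify $\sum_i t_i'\beta_i'(x')y' = x'\otimes y'$ for all $x',y'\in A'$, after which Proposition \ref{DT} gives that $A'/B'$ is left depth two. The right depth two case is symmetric, using $\sum_i x\beta_i(y)t_i = x\otimes y$.

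To check the identity, we apply the $B'$-$B'$-isomorphism $\theta : A'\otimes_{B'}A' \to N^*\otimes_B A\otimes_B A\otimes_B N$ from the proof of Lemma \ref{L4}. Both sides are additive in $x'$ and $y'$, so we may take $x' = \rho\otimes x\otimes u$ and $y' = \sigma\otimes y\otimes v$. The right-hand side becomes $\theta(x'\otimes y') = \rho\otimes x\cdot u^\sigma\otimes y\otimes v$. For the left-hand side, write $t_i = \sum_r x_{ir}\otimes y_{ir}$, so that
\[
\psi(t_i) = \sum_{r,j,k} (f_j\otimes x_{ir}\otimes n_k)\otimes(g_k\otimes y_{ir}\otimes m_j),
\]
and $\beta_i'(x') = \rho\otimes\beta_i(x)\otimes u$. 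The product $\psi(t_i)\,\beta_i'(x')\,y'$ has left action of $A'$ on the second tensor factor, where it multiplies $g_k\otimes y_{ir}\otimes m_j$ by $\rho\otimes\beta_i(x)\otimes u$ and then by $\sigma\otimes y\otimes v$. This yields $g_k\otimes y_{ir}\cdot m_j^{\rho}\cdot\beta_i(x)\cdot u^\sigma\cdot y\otimes v$.

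Applying $\theta$ collapses the middle index $k$, leaving
\[
\sum_{i,r,j} f_j\otimes x_{ir}\otimes y_{ir}\, m_j^\rho\,\beta_i(x)\,u^\sigma y\otimes v.
\]
Because $t_i$ centralizes $B$ and $m_j^\rho\in B$, the factor $m_j^\rho$ can be moved to the far left, giving $\sum_j f_j\cdot m_j^\rho \otimes \cdots$. This turns $\sum_j f_j\, m_j^\rho$ into $\rho$, which is the same manipulation as in the proof of Lemma \ref{L4}. We are left with $\rho\otimes\sum_i t_i\beta_i(x)(u^\sigma y)\otimes v$. Applying the hypothesis with $y$ replaced by $u^\sigma y$ gives $\rho\otimes x\otimes u^\sigma y\otimes v = \rho\otimes x u^\sigma\otimes y\otimes v$. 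This equals $\theta(x'\otimes y')$, and since $\theta$ is injective the identity holds.

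The main obstacle is bookkeeping. We must track correctly how $A'$ acts on $A'\otimes_{B'}A'$ through the identification $A' = N^*\otimes_B A\otimes_B N$. We must also confirm that $\phi(\beta_i)$ is well defined, which holds because $\beta_i$ is $B$-$B$-linear, and that $\theta$ is compatible with these actions. The key step is the one that moves $m_j^\rho$ across $t_i$ using $t_i\in(A\otimes_BA)^B$. In the right depth two case the analogous step moves an element $v^{\,g}$ or $u^{f_j}$ across $t_i$ on the other side, exactly as in the first computation in the proof of Lemma \ref{L4}.
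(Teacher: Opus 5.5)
Your proposal is correct and follows the paper's proof essentially step for step. You transport the quasibases to $\psi(t_i)$ and $\phi(\beta_i)$ via Lemmas \ref{L4} and \ref{L2}, apply the isomorphism $\theta$, move $m_j^\rho$ across $t_i$ using $t_i\in(A\otimes_BA)^B$, collapse $\sum_j f_j\cdot m_j^\rho=\rho$, and invoke the hypothesis with $y$ replaced by $u^\sigma y$, just as the paper does. The only blemish is that you call it a ``left'' action on the second tensor factor, when it is right multiplication by $\beta_i'(x')y'$; that right multiplication is what you actually compute.
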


\begin{proof}  Assume that $A/B \sim A'/B'$ and $A/B$ is a left depth two extension. 
By Proposition \ref{DT} ,  
there exist $t_i \in (A\otimes_BA)^B$ and $\beta_i \in {\rm End}^\ell({}_BA_B)$ 
such that $\sum_i t_i \beta_i(x)y=x \otimes y$  for all $x$, $y \in A$.
Let $t_i = \sum_r x_{ir} \otimes y_{ir}$. 
By Lemma \ref{L2} and Lemma \ref{L4}, 
$\phi(\beta_i) = 1 \otimes \beta_i \otimes 1 \in {\rm End}^\ell({}_{B'}A'_{B'})$ and 
$\psi(t_i) = \sum_{r,j,k} (f_j \otimes x_{ir} \otimes n_k) \otimes (g_k \otimes y_{ir} \otimes m_j)  \in (A' \otimes_{B'} A')^{B'}$. 
Let $\theta$ be as in the proof of Lemma \ref{L4}. 
For any $z'=\sum_\lambda \rho_\lambda \otimes z_\lambda  \otimes u_\lambda$ and 
$w'=\sum_\mu \sigma_\mu \otimes w_\mu \otimes v_\mu$  in $A'$, 
we have 
$\theta(z' \otimes w') 
 = \sum_{\lambda, \mu} \rho_\lambda \otimes z_\lambda \otimes {u_\lambda}^{\sigma_\mu} \cdot w_\mu \otimes v_\mu$, 
and  
\begin{align*}
&\theta\left( \sum_i \psi(t_i) \phi(\beta_i)(z') w' \right)\\
&=  \theta\left( \sum_{i, r, \lambda, \mu, j, k}(f_j \otimes x_{ir} \otimes n_k) \otimes (g_k \otimes y_{ir} \otimes m_j)
\left( \rho_\lambda \otimes \beta_i(z_\lambda)  \otimes u_\lambda \right)(\sigma_\mu \otimes w_\mu \otimes v_\mu) \right)\\
&=  \theta\left( \sum_{i, r, \lambda, \mu, j, k}(f_j \otimes x_{ir} \otimes n_k) \otimes (g_k \otimes y_{ir} \cdot {m_j}^{\rho_\lambda}
 \cdot \beta_i(z_\lambda)  \cdot {u_\lambda}^{\sigma_\mu} \cdot w_\mu \otimes v_\mu) \right)\\
&=  \sum_{i,  \lambda, \mu, j} f_j \otimes t_i \cdot {m_j}^{\rho_\lambda}
 \cdot \beta_i(z_\lambda)  \cdot {u_\lambda}^{\sigma_\mu} \cdot w_\mu \otimes v_\mu \\
&=  \sum_{ \lambda, \mu, j} f_j \cdot {m_j}^{\rho_\lambda}\otimes \left( \sum_i t_i  
 \cdot \beta_i(z_\lambda)  \cdot {u_\lambda}^{\sigma_\mu} \cdot w_\mu \right) \otimes v_\mu\\
&=   \sum_{\lambda, \mu} \rho_\lambda \otimes z_\lambda \otimes {u_\lambda}^{\sigma_\mu} \cdot w_\mu \otimes v_\mu.
\end{align*}
This implies $\sum_i \psi(t_i)  \phi(\beta_i)(z') w'= z' \otimes w'$. 
Hence $A'/B'$ is a left depth two extension by Proposition \ref{DT}. 

By the same argument as above, we can prove that the class of right depth two extensions is Morita invariant. 
This completes the proof.
\end{proof}

In \cite{MM}, A. Mewborn and E. McMahon introduced the notion of strongly separable extensions 
as a generalization of  Hirata separable extensions. 
In \cite{S3}, K. Sugano characterized strongly separable extensions as Proposition \ref{SS}.  
Then  we shall prove the following.

\begin{Theorem}
The class of strongly separable extensions is Morita invariant.
\end{Theorem}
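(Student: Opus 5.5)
We use Sugano's characterization, Proposition \ref{SS}, and transport the data along the maps $\varphi$ and $\psi$, exactly as in the proof of Theorem \ref{D2}. Assume $A/B \sim A'/B'$ and that $A/B$ is strongly separable. By Proposition \ref{SS} there are $v_i \in V_A(B)$ and $e_i=\sum_j x_{ij}\otimes y_{ij} \in (A\otimes_BA)^A$ with $u=\sum_{i,j} v_i x_{ij} u y_{ij}$ for all $u \in V_A(B)$. We set
$$v_i' = \varphi(v_i)=\sum_s f_s\otimes v_i\otimes m_s \quad\text{and}\quad e_i'=\psi(e_i)=\sum_{j,s,k}(f_s\otimes x_{ij}\otimes n_k)\otimes(g_k\otimes y_{ij}\otimes m_s).$$
By Lemma \ref{L1} we have $v_i'\in V_{A'}(B')$, and by Lemma \ref{L3} we have $e_i'\in (A'\otimes_{B'}A')^{A'}$. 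It remains to check the identity $u'=\sum_{i} v_i'\,(e_i'\cdot u')$ for every $u'\in V_{A'}(B')$, where $(x\otimes y)\cdot u'$ means $xu'y$.

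By Lemma \ref{L1}, every $u'\in V_{A'}(B')$ has the form $u'=\varphi(u)=\sum_t f_t\otimes u\otimes m_t$ with $u\in V_A(B)$. We compute $\sum_{j,s,k}(f_s\otimes x_{ij}\otimes n_k)\,\varphi(u)\,(g_k\otimes y_{ij}\otimes m_s)$ with the multiplication rule of $N^*\otimes_BA\otimes_BN$. The product is
$$\sum f_s\otimes x_{ij}\, n_k^{f_t}\, u\, m_t^{g_k}\, y_{ij}\otimes m_s.$$
Since $u$ commutes with elements of $B$, we may move $n_k^{f_t}$ past $u$. Then $\sum_t n_k^{f_t} m_t^{g_k}=(\sum_t n_k^{f_t}m_t)^{g_k}=n_k^{g_k}$, and $\sum_k n_k^{g_k}=1$. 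So the expression collapses to $\sum_s f_s\otimes x_{ij}uy_{ij}\otimes m_s=\varphi(x_{ij}uy_{ij})$. Here we must be careful that $x_{ij}uy_{ij}$ is interpreted via the element $e_i$, which is well defined because $u$ centralizes $B$. Multiplying on the left by $\varphi(v_i)$ and using that $\varphi$ is multiplicative on elements of $V_A(B)$ (or computing directly, again moving $B$-scalars through $v_i$), we get
$$\sum_i v_i'(e_i'\cdot u')=\varphi\Bigl(\sum_{i,j}v_ix_{ij}uy_{ij}\Bigr)=\varphi(u)=u'.$$
Proposition \ref{SS} then shows that $A'/B'$ is strongly separable.

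The main obstacle is the bookkeeping in the middle step. The elements $x_{ij}$ and $y_{ij}$ are not individually well defined, only the sum $e_i$ is, so each manipulation must be arranged to use only the $B$-balanced expression $x\otimes y\mapsto xuy$. The cancellation $\sum_{t,k} n_k^{f_t}m_t^{g_k}=1$ also relies on commuting $B$-scalars past $u$. Provided the computation is organized through $\theta$, or directly through the multiplication formula as in Lemma \ref{L4}, this is routine. The remaining facts, namely the surjectivity of $\varphi$ onto $V_{A'}(B')$ and the invariance of $\psi(e_i)$, are supplied by Lemmas \ref{L1} and \ref{L3}.
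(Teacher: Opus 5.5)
Your proposal is correct and follows essentially the same route as the paper: transport Sugano's data along $\varphi$ and $\psi$ (Lemmas \ref{L1} and \ref{L3}), write $u'=\varphi(u)$ with $u\in V_A(B)$, and verify the identity by moving $B$-scalars past $u$ and $v_i$ and cancelling with the dual-basis relations. The only difference is bookkeeping: the paper expands the four-fold product $v_i'x_{ir}'u'y_{ir}'$ in one pass, while you first collapse $e_i'\cdot u'$ to $\varphi\bigl(\sum_j x_{ij}uy_{ij}\bigr)$ and then use multiplicativity of $\varphi$ on $V_A(B)$.
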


\begin{proof}    Assume that $A/B \sim A'/B'$ and $A/B$ is a strongly separable extension.
By Proposition \ref{SS},  
there exist $v_i \in V_A(B)$ and $\sum_r x_{ir} \otimes y_{ir} \in (A \otimes_BA)^A$ such that 
$u=\sum_{i,r} v_i x_{ir} u y_{ir}$ for any $u \in V_A(B)$. 
Let $v'_i = \sum_\lambda f_\lambda \otimes v_i \otimes m_\lambda$ and 
$\sum_r x'_{ir} \otimes y'_{ir} = \sum_{r,j,k} (f_j \otimes x_{ir} \otimes n_k) \otimes(g_k \otimes y_{ir} \otimes m_j)$. 
Then $v'_i \in V_{A'}(B')$ and $\sum_r x'_{ir} \otimes y'_{ir} \in (A' \otimes_{B'}A')^{A'}$ by Lemma \ref{L1} and Lemma\ref{L3}. 
For any $u' = \sum_\ell f_\ell \otimes u \otimes m_\ell \in V_{A'}(B')$ with $u \in V_A(B)$, we see that 
\begin{align*}
&\sum_{i,r} v'_i x'_{ir} u' y'_{ir}\\
&= \sum_{i,r,\lambda,j,k,\ell} (f_\lambda \otimes v_i \otimes m_\lambda)(f_j \otimes x_{ir} \otimes n_k)
(f_\ell \otimes u \otimes m_\ell)(g_k \otimes y_{ir} \otimes m_j)\\
&= \sum_{i,r,\lambda,j,k,\ell} (f_\lambda \otimes v_i \cdot {m_\lambda}^{f_j} \cdot x_{ir} \otimes n_k)
(f_\ell \otimes u \cdot {m_\ell}^{g_k} \cdot y_{ir} \otimes m_j)\\
&= \sum_{i,r,\lambda,j,k,\ell} (f_\lambda \cdot {m_\lambda}^{f_j} \otimes v_i  \cdot x_{ir} \otimes n_k)
(f_\ell \cdot {m_\ell}^{g_k} \otimes u  \cdot y_{ir} \otimes m_j)\\
&= \sum_{i,r,j,k} (f_j \otimes v_i  \cdot x_{ir} \otimes n_k)
(g_k \otimes u  \cdot y_{ir} \otimes m_j)\\
&= \sum_{i,r,j} f_j \otimes v_i  \cdot x_{ir} \cdot u  \cdot y_{ir} \otimes m_j\\
&= \sum_j f_j \otimes u \otimes m_j\\
&= u'.
\end{align*}
Hence $ A'/B'$ is a strongly separable extension by Proposition \ref{SS}. 
This completes the proof.
\end{proof}

N. Hamaguchi and A. Nakajima introduced the notion of weakly separable extensions 
as a generalization of separable extensions in their recent paper \cite{HN}.  
Concerning weakly separable extensions, we shall show  the following.

\begin{Theorem}\label{WS}
The class of weakly separable extensions is Morita invariant.
\end{Theorem}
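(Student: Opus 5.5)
We use Lemma \ref{L5} with $S=A$, so that $S'=N^{*}\otimes_B A\otimes_B N$, which is identified with $A'$ via $\alpha$. We must check that this identification respects the $A'$-$A'$-bimodule structures. The action of $A'$ on $S'$ is given by the same formula as the multiplication in $N^{*}\otimes_B A\otimes_B N$, so $S'={}_{A'}A'_{A'}$. Lemma \ref{L5} then gives mutually inverse isomorphisms
\[
\phi:{\rm Der}_B(A,A)\longrightarrow{\rm Der}_{B'}(A',A'),\qquad \pi:{\rm Der}_{B'}(A',A')\longrightarrow{\rm Der}_B(A,A).
\]

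Assume $A/B$ is weakly separable and let $D'\in{\rm Der}_{B'}(A',A')$. Put $D=\pi(D')\in{\rm Der}_B(A,A)$. By hypothesis there is $a\in A$ with $D(x)=ax-xa$ for all $x\in A$. Since $\phi\pi=1$, we have $D'=\phi(D)$, so for every $\sigma\otimes x\otimes v\in A'$,
\[
D'(\sigma\otimes x\otimes v)=\sigma\otimes(ax-xa)\otimes v .
\]
Our candidate is $a'=\varphi(a)=\sum_j f_j\otimes a\otimes m_j$, and we claim $D'(x')=a'x'-x'a'$ for all $x'\in A'$.

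To verify the claim, compute using the multiplication rule and $\sum_j f_j\cdot m_j^{\sigma}=\sigma$:
\[
a'(\sigma\otimes x\otimes v)=\sum_j f_j\otimes a\cdot m_j^{\sigma}\cdot x\otimes v .
\]
Moving $m_j^{\sigma}\in B$ across the tensor requires $a$ to commute with $m_j^{\sigma}$. This fails in general, and it is the main obstacle.

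To get around it, note that $D(b)=0$ for all $b\in B$, so $ab=ba$ for every $b\in B$, i.e.\ $a\in V_A(B)$. Therefore $a\cdot m_j^{\sigma}=m_j^{\sigma}\cdot a$, and
\[
a'(\sigma\otimes x\otimes v)=\sum_j f_j\cdot m_j^{\sigma}\otimes a x\otimes v=\sigma\otimes ax\otimes v .
\]
On the other side,
\[
(\sigma\otimes x\otimes v)\,a'=\sum_j \sigma\otimes x\cdot v^{f_j}\cdot a\otimes m_j ,
\]
and $v^{f_j}\in B$ commutes with $a$, so this equals
\[
\sum_j\sigma\otimes xa\otimes v^{f_j}\cdot m_j=\sigma\otimes xa\otimes v .
\]
(Alternatively, by Lemma \ref{L1}, $a'\in V_{A'}(B')$ and $\varphi(a)$ acts as described; the direct computation above suffices.)

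Subtracting the two computations gives $a'x'-x'a'=D'(x')$ on generators, hence on all of $A'$ by additivity. Thus $D'$ is inner, and $A'/B'$ is weakly separable.
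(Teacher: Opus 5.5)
Your proof is correct and follows essentially the same route as the paper. You use Lemma \ref{L5} with $S=A$, so $S'=A'$, to write $D'=1\otimes D\otimes 1$; you take the element $a\in V_A(B)$ making $D$ inner; and you check directly that $\varphi(a)=\sum_j f_j\otimes a\otimes m_j$ implements $D'$, using $\sum_j f_j\cdot m_j^{\sigma}=\sigma$, $\sum_j v^{f_j}\cdot m_j=v$ and the fact that $a$ commutes with $B$. The only cosmetic differences are that you verify on simple tensors and extend by additivity, and that you state explicitly the bimodule identification $S'={}_{A'}A'_{A'}$, which the paper uses tacitly.
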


\begin{proof} Assume that $A/B \sim A'/B'$ and $A/B$ is a weakly separable extension. 
Let $D'$ be in ${\rm Der}_{B'}(A',A')$. 
By Lemma \ref{L5}, $D'=1 \otimes D \otimes 1$ for some $D \in {\rm Der}_B(A,A)$.  
By the assumption, there exists $v \in A$ such that $D(x) = vx-xv$ ($x\in A$). 
Obviously, $v$ is in $V_A(B)$. Now, we put here $v'=\sum_j f_j \otimes v \otimes m_j$. 
Then, for any $x'=\sum_r \rho_r \otimes x_r \otimes u_r \in A'$, we see that 
\begin{align*}
D'(x') &=  \sum_r \rho_r \otimes D(x_r) \otimes u_r\\
&=   \sum_r \rho_r \otimes (vx_r-x_rv) \otimes u_r\\
&=   \sum_r \left( \left( \sum_i f_i \cdot {m_i}^{\rho_r} \right) \otimes v x_r \otimes u_r - \rho_r \otimes x_rv \otimes \left( \sum_j {u_r}^{f_j} \cdot m_j \right) \right)\\
&=    \sum_{r,i,j} ( f_i  \otimes v  \cdot {m_i}^{\rho_r} \cdot x_r \otimes u_r 
-   \rho_r \otimes x_r \cdot {u_r}^{f_j} \cdot v \otimes  m_j)\\
&= \sum_{r,i,j} ( (f_i \otimes v  \otimes m_i)(\rho_r \otimes x_r \otimes u_r) 
- (\rho_r \otimes x_r \otimes u_r)(f_j \otimes v \otimes  m_j))\\
&=   v'x'  - x'v'.
\end{align*}
Thus, $D'$ is an inner $B'$-derivation. This completes the proof.
\end{proof}

Finally, we shall show an example of a class of ring extensions which is not Morita invariant. 

{\bf Example.} 
Let $k$ be a field of a prime characteristic $p$, $A=k[t]$, and $B=k[t^p]$. 
Then  we see that $\{x^p \, | \, x \in A\} \subseteq B$. 
We set here 
$A'=\left[\begin{array}{cc}
A & A\\
A & A
\end{array}\right]$, $B'=\left[\begin{array}{cc}
B & B\\
B & B
\end{array}\right]$, 
${}_AM={}_AA \oplus {}_AA$, and ${}_BN={}_BB \oplus {}_BB$. 
One easily sees that ${\rm End}^r({}_AM)=A'$, ${\rm End}^r({}_BN)=B'$, and ${}_AA\otimes_BN_{B'} \cong {}_AM_{B'}$.
It follows that $A/B \sim A'/B'$.  
However,
$\{ (x')^n \, | \, x' \in A'\} \nsubseteq B'$ for any positive integer $n$. 
In fact,  $\left[\begin{array}{cc}
1 & t\\
0 & 0
\end{array}\right]^n = \left[\begin{array}{cc}
1 & t\\
0 & 0
\end{array}\right] \notin B'$. 
Consider the class of ring extensions which satisfies the following property : 
there exists a positive integer $n$ such that $\{ x^n \, | \, x \in A \} \subseteq B$. 
Then the class is not Morita invariant. 

\bigskip
{\bf Remark.}  
It may seem that the classes of almost all ring extensions  are Morita invariant. 
But, the author does not know whether the classes of quasi-separable extensions and weakly quasi-separable extensions
are  Morita invariant or not. 
Moreover,  $A/B$ is called a {\it finite normalizing extension} if there is a finite set of elements $\{a_1, a_2,\cdots, a_n \}$ of $A$ 
 such that $A = \sum_{i=0}^n a_i B$ and $a_i B = B a_i$ ($0 \leqq i \leqq n$). 
Then the author also does not know whether the class of finite normalizing extensions is Morita invariant or not.   

\bigskip
{\bf ACKNOWLEDGEMENTS.}  
The author wishes to thank Professor S. Ikehata with whose guidance and encouragement this work was done. 
The author  also would like to thank the referee for his valuable suggestions.


\end{document}